%% file: root.tex
\documentclass[letterpaper, 10 pt, conference]{ieeeconf}  

\input{Packages_NewCommands}

\IEEEoverridecommandlockouts
\begin{document}

\title{\LARGE \bf
Over-Approximating Minimizer Sets of Constrained Convex Programs with Parametric Uncertainty via Reachability Analysis
}

\author{
Brendan Gould$^{1\star}$,
Chih-Yuan Chiu$^{1\star}$, Antoine P. Leeman$^{2}$, \\ Kyriakos G. Vamvoudakis$^1$, Samuel Coogan$^1$, Glen Chou$^1$
\thanks{$^\star$Equal contribution. $^{1}$Georgia Institute of Technology, Atlanta, GA, USA. \texttt{\{bgould6\} at gatech dot edu}. $^{2}$ETH Zurich, IDSC, Switzerland. }
}

\maketitle

\thispagestyle{empty}
\pagestyle{empty}




\input{0_Abstract}


\input{1_Introduction}

\input{2_Related_Work}
\input{3_Problem_Statement}

\input{4_Preliminaries}

\input{5_Methods}

\input{6_Experiments}

\input{7_Conclusion_Future_Work}


\bibliographystyle{IEEEtran}
\bibliography{references_trunc}


\section*{Acknowledgement}
The authors thank Shuyu Zhan for his assistance with the simulation experiments.






\end{document}

%% file: Packages_NewCommands.tex
\usepackage{booktabs}
\usepackage{adjustbox}

\usepackage[top=20.2mm,bottom=18mm,left=16.9mm,right=20.2mm]{geometry}

\usepackage{wrapfig}
\usepackage{lipsum}
\usepackage{amssymb}
\usepackage{amsmath}
\usepackage{siunitx}
\usepackage{mathtools}
\usepackage{hyperref}
\let\labelindent\relax
\usepackage[inline]{enumitem}
\usepackage[ruled,vlined,linesnumbered]{algorithm2e}
\usepackage[noend]{algpseudocode}
\usepackage{ifthen}
\usepackage{dsfont}
\usepackage{balance}
\usepackage{dirtytalk}
\usepackage{afterpage}
\usepackage{changepage}
\usepackage{bm}
\usepackage{cite}

\usepackage{times}

\usepackage{svg}

\allowdisplaybreaks

\newcommand{\A}{\mathcal{A}}

\newcommand{\B}{\mathcal{B}}
\newcommand{\C}{\mathcal{C}}

\newcommand{\diag}{\text{diag}}
\newcommand{\blockDiag}{\text{blkdiag}}
\newcommand{\E}{\mathbb{E}}

\newcommand{\K}{\mathcal{K}}

\newcommand{\N}{\mathbb{N}}

\newcommand{\ra}{\rightarrow}
\newcommand{\R}{\mathbb{R}}

\newcommand{\Z}{\mathcal{Z}}

\newcommand{\obj}{J}
\newcommand{\objSq}{J_{\mathrm{sq}}}
\newcommand{\objSqCons}{J_{\mathrm{sqc}}}
\newcommand{\objLQR}{J_{\mathrm{LQR}}}
\newcommand{\param}{\theta}
\newcommand{\paramDim}{d}
\newcommand{\paramSet}{\Theta}
\newcommand{\paramSetSq}{\Theta_{\mathrm{sq}}}
\newcommand{\paramSetLQR}{\Theta_{\mathrm{LQR}}}

\newcommand{\var}{\xi}
\newcommand{\varMap}{\phi}
\newcommand{\varOpt}{\var^\star}
\newcommand{\varDim}{n}

\newcommand{\varOptSet}{S_\var^\star}

\newcommand{\varDual}{\lambda}
\newcommand{\varDualOpt}{\varDual^\star}

\newcommand{\outputs}{y}

\newcommand{\varStacked}{\boldsymbol{\var}}

\newcommand{\ratePGDStacked}{\boldsymbol{\ratePGD}}
\newcommand{\outputStacked}{\boldsymbol{y}}

\newcommand{\varNominal}{\hat\var}
\newcommand{\paramNominal}{\hat\param}
\newcommand{\ratePGDNominal}{\hat\ratePGD}

\newcommand{\ratePGDNominalStacked}{\hat{\boldsymbol{\ratePGD}}}

\newcommand{\paramError}{\Delta\param}
\newcommand{\ratePGDError}{\Delta\ratePGD}
\newcommand{\ratePGDErrorStacked}{\Delta\ratePGDStacked}

\newcommand{\convergenceRadius}{\beta}

\newcommand{\ballUnit}{\mathbb{B}}

\newcommand{\linearizationError}{r}
\newcommand{\disturbance}{d}
\newcommand{\disturbanceStacked}{\boldsymbol{\disturbance}}

\newcommand{\linearizationErrorSmoothed}{\tilde{\linearizationError}}
\newcommand{\disturbanceSmoothed}{\tilde{\disturbance}}

\newcommand{\regularizerSystemResponse}{H}

\newcommand{\smoothnessConstant}{L}
\newcommand{\strongConvexityConstant}{m}

\newcommand{\ratePGD}{\alpha}
\newcommand{\ratePGDMin}{c_\ratePGD}
\newcommand{\ratePGDMax}{C_\ratePGD}
\newcommand{\convergenceRatePGD}{\gamma}
\newcommand{\dynamicsPGD}{f_{\text{PGD}}}
\newcommand{\dynamicsPGDSq}{f_{\mathrm{PGD, sq}}}
\newcommand{\dynamicsPGDLQR}{f_{\mathrm{PGD, LQR}}}

\newcommand{\feedbackMap}{\pi}
\newcommand{\feedbackGain}{K}
\newcommand{\setFeedbackGains}{S_\feedbackGain}
\newcommand{\feedbackGainStacked}{\K}

\newcommand{\downshift}{\Z}

\newcommand{\systemResponse}{\Phi}

\newcommand{\jacobianState}{A}
\newcommand{\jacobianControl}{B}
\newcommand{\jacobianStateStacked}{\A}
\newcommand{\jacobianControlStacked}{\B}
\newcommand{\outputMatrixStacked}{\C}

\newcommand{\jacobianStateSmoothed}{\tilde{\jacobianState}}
\newcommand{\jacobianControlSmoothed}{\tilde{\jacobianControl}}

\newcommand{\proj}{\text{proj}}

\newcommand{\constraintSet}{{S_{\mathcal{C}}}}

\newcommand{\iterationHorizon}{N}
\newcommand{\LQRHorizon}{T}
\newcommand{\LQRIterate}{t}

\newcommand{\state}{\zeta}
\newcommand{\stateNominal}{\hat\state}
\newcommand{\stateStacked}{\boldsymbol{\state}}
\newcommand{\stateNominalStacked}{\hat{\boldsymbol{\state}}}
\newcommand{\stateError}{\Delta\state}
\newcommand{\stateErrorStacked}{\Delta\stateStacked}

\newcommand{\dynamicsFullState}{f}

\newcommand{\outputNoise}{v}

\newcommand{\tubeSize}{\tau}
\newcommand{\tubeSizeStacked}{\boldsymbol{\tau}}

\newcommand{\varInitialSet}{S_\var^0}

\newcommand{\hessianBound}{\mu}
\newcommand{\hessianBoundStacked}{\boldsymbol{\mu}}
\newcommand{\hessianBoundSmoothed}{\tilde{\hessianBound}}
\newcommand{\hessianBoundSmoothedStacked}{\tilde{\hessianBoundStacked}}

\newcommand{\varForwardReachableSet}[1]{S_{\var, {#1}}}

\newcommand{\varConvexHull}{S_{\var, \text{CH}}}
\newcommand{\conv}{\text{conv}}

\newcommand{\diam}{\text{diam}}
\newcommand{\dist}{\text{dist}}

\newcommand{\outputMatrix}{C}
\newcommand{\zero}{\textbf{0}}

\newcommand{\ratePGDBoundSlope}{a}
\newcommand{\ratePGDBoundOffset}{b}

\newcommand{\samplingRadius}{\delta}

\newcommand{\samplingRadiusMax}{C_\samplingRadius}
\newcommand{\samplingVariable}{q}

\newcommand{\dynamicsPGDSmoothed}{\tilde{f}_{\text{PGD}}}
\newcommand{\dynamicsFullStateSmoothed}{{\tilde{\dynamicsFullState}}}

\newcommand{\exampleFunction}{h}
\newcommand{\exampleFunctionSmoothed}{\tilde{\exampleFunction}}
\newcommand{\varExample}{z}

\newcommand{\exampleInputDim}{m}
\newcommand{\exampleOutputDim}{{\overline{m}}}

\newcommand{\lipschitz}{\ell}
\newcommand{\exampleConstraintSet}{S_\exampleFunction}

\newcommand{\unif}{\text{Unif}}

\newcommand{\LQRStateCost}{Q}
\newcommand{\LQRControlCost}{R}
\newcommand{\LQRStateDyn}{A}
\newcommand{\LQRControlDyn}{B}

\newcommand{\SqcQuadScale}{c_1}
\newcommand{\SqcLinScale}{c_2}
\newcommand{\SqcConsVal}{c_3}
\newcommand{\SqcLipschitz}{\ell_{\mathrm{sqc}}}

\newtheorem{remark}{Remark}
\newtheorem{assumption}{Assumption}
\newtheorem{proposition}{Proposition}

\newtheorem{theorem}{Theorem}


%% file: 0_Abstract.tex
\begin{abstract}
\looseness-1
We
study the set of solutions to a parameterized, strongly convex optimization problem whose cost depends on uncertain, bounded parameters.
We compute
a certified outer approximation of the corresponding set of optimizers, 
using convergence properties of the projected gradient descent (PGD) algorithm for convex programs. Concretely, by treating the cost parameter as constant but unknown, we interpret the PGD iterates as an uncertain dynamical system and analyze its forward reachable sets. 
Since PGD converges exponentially to the unique optimizer for each fixed parameter, these reachable sets provide 
outer approximations of the optimizer set, with an explicit error bound that decays exponentially with the iteration count. 
We apply system-level synthesis (SLS) on the PGD dynamics to optimize the step-size sequence and obtain reachable-set over-approximations. 
Our method outperforms existing baselines in over-approximating, with low conservativeness, the minimizer sets of convex programs with uncertain costs
and high-dimensional decision variables.
\end{abstract}

%% file: 1_Introduction.tex
\section{Introduction}
\label{sec: Introduction}

Across a broad range of controls and robotics contexts, optimization is a critical component of planning and decision-making.
Frequently, the data informing these decisions is not known exactly, but subject to many types of \emph{uncertainty}.
For instance, to operate safely in dense traffic, an autonomous vehicle must anticipate the motion of nearby cars and pedestrians, whose intent
can only be partially inferred from observations~\cite{englert2017inverse, Peters2023OnlineandOfflineLearningofPlayerObjectivesfromPartialObservationsinDynamicGames}.
When modeling the motion of such agents as cost minimization~\cite{englert2017inverse, Menner2021ConstrainedInverseOptimalControlWithApplicationtoaHumanManipulationTask}, intent uncertainty induces uncertainty in their planned motion, which could cause unsafe behavior.
Separately, quasistatic \cite{li2026certified} and deformable object \cite{li2025convex} contact dynamics models in robotic manipulation can be expressed as the solution to convex conic programs, where uncertainty in the mass matrix manifests as uncertainty in the cost function.
Towards studying robust decision forecasting,
we present a method for over-approximating the set of minimizers of a convex program whose cost is characterized by uncertain parameters.
By robustly bounding the solution set for 
all possible parameter realizations,
our work 
provides valuable guidance for safe downstream planning.

\looseness-1The tractability of robustly over-approximating the minimizer set varies significantly depending on the structural properties of the objective function, constraint set, and parameter set. 
Closed-form expressions for the minimizer set are generally obtainable only under restrictive assumptions. 
Sensitivity analysis techniques
can be applied to bound the minimizer's variation
as the parameter changes,
but such bounds are often highly conservative (see \ref{subsubsec:baseline}), limiting their utility for downstream tasks.
Related work in set-based reachability and robust optimal control studies how uncertainty propagates through dynamical systems \cite{althoff2008reachability, schafer2025robust}. However, these works seek a single solution minimizing worst-case cost under uncertainty, whereas we aim to over-approximate the full set of minimizers across all parameter realizations.
\looseness-1To overcome the limitations of existing methods, we use system-level synthesis (SLS) to compute finite-time forward reachable-set over-approximations of projected gradient descent (PGD) dynamics with low conservativeness. We then leverage known convergence properties of the PGD dynamics 
\cite{WrightRecht2022OptimizationForDataAnalysis} 
to convert these reachable sets into certified outer approximations of the corresponding minimizer set.
Concretely, the contribution of this paper is fourfold, as follows:
\begin{enumerate}
    \item We prove that, under standard assumptions on the objective, constraint, and parameter sets, the PGD dynamics' (true) forward reachable sets converge exponentially to the minimizer set. 
    We thus recast over-approximating the minimizer set as computing forward reachable sets of the PGD dynamics with a fixed but unknown parameter.
    \item For \textit{differentiable} PGD dynamics, we over-approximate forward reachable sets of PGD trajectories with the SLS-based reachability framework~\cite{Leeman2025RobustNonlinearOptimalControlviaSLS} for smooth nonlinear dynamics, scaling tractably to high-dimensional systems with low over-approximation error 
    \cite{Leeman2025RobustNonlinearOptimalControlviaSLS, leeman2026vision, furieri2019input, fang2026safe}.
    \item To compute reachable sets for \textit{non-differentiable} PGD dynamics, we apply the sampling-based smoothing method in \cite{Flaxman2005OnlineConvexOptimization} to construct a differentiable approximation of PGD for which the SLS techniques in \cite{Leeman2025RobustNonlinearOptimalControlviaSLS} apply.
    %
    \item 
    In numerical simulations,
    our method outperforms existing sensitivity analysis baselines in constructing over-approximations, with low conservativeness, of minimizer sets of optimization programs with parametric cost uncertainty. 
    Our method scales tractably to optimization programs with up to 64-dimensional variables.
\end{enumerate}

%% file: 2_Related_Work.tex
\looseness-1Our work is conceptually related to the field of robust optimization, which studies decision-making under worst-case realizations of uncertain data \cite{bertsimas2004price, ben1998robust, bertsimas2011theory}.
However, our aim differs in that
instead of identifying a single robust decision, we seek to characterize the set of decisions that are optimal for some admissible parameter value. 
In this sense, prior work on 
multi-parametric programming
and solution-map sensitivity analysis
are more closely aligned with our objective, since such works study how minimizers change as problem data vary \cite{bonnans2013perturbation, dontchev2009implicit, shapiro2005sensitivity, jones2007lexicographic}. 
Our contribution departs from this literature by focusing on a global over-approximation of the full minimizer set induced by parameter uncertainty, explicitly leveraging reachability techniques from control theory~\cite{bravo2005computation, immrax}.

\textbf{Notation}:
For $m, n \in \mathbb{Z}^+$, we define $[n] := \{1, \cdots, n\}$ and $[0, n] := \{0\} \cup [n]$, and denote the $n \times n$ identity matrix, $n \times n$ zero matrix, $m \times n$ zero matrix, and the zero vector in $\R^n$ by $I_{n \times n}$, $O_n$, $O_{m \times n}$, and $\zero_n$ respectively, with indices omitted when clear from context.
We denote the block-diagonal matrix constructed from given matrix sub-blocks $M_1, \cdots, M_n$ by $\blockDiag\{M_1, \cdots, M_n\}$; when $M_1, \cdots, M_n$ are all scalar, we write $\diag\{\cdot\}$ instead of $\blockDiag\{\cdot\}$.
Given $p \in [1, \infty]$ and $M \in \R^{n \times d}$, and we denote the unit $p$-norm ball in $\R^d$ by $\ballUnit_p^d$, and we define $M \ballUnit_p^d := \{Mx: x \in \ballUnit_p^d \}$.
Given a set $S \subseteq \R^n$, 
we define the \textit{diameter} of $S$ by $\diam(S) := \sup_{s, s' \in S} \Vert s - s'\Vert_2$ and the \textit{convex hull} of $S$ by $\conv(S)$.
Given sets $S, S' \subseteq \R^n$, we define the \textit{Minkowski sum} of $S$ and $S'$ by $S \oplus S' := \{s + s': s \in S, s' \in S' \}$, and the Hausdorff distance between $S$ and $S'$ by $\dist(S, S') := \inf \{ \epsilon > 0: S \subseteq S' \oplus \epsilon I_n \ballUnit_2^n, S' \subseteq S \oplus \epsilon I_n \ballUnit_2^n \}$.

%% file: 3_Problem_Statement.tex
\section{Problem Statement}
\label{sec: Problem Statement}




Consider an objective $\obj: \R^\varDim \times \R^\paramDim \ra \R$ mapping a variable $\var \in \constraintSet \subseteq \R^\varDim$ and a parameter $\param \in \paramSet \subset \R^\paramDim$ to a real number.
We impose the following regularity assumptions.
\begin{assumption}
\label{Assump: Objective Properties}
All partial second derivatives of $\obj(\var, \param)$ exist and are Lipschitz continuous. 
There exist scalars
$\strongConvexityConstant, \smoothnessConstant > 0$, with $\strongConvexityConstant \leq \smoothnessConstant$, such that $\obj(\cdot, \param)$ is $\smoothnessConstant$-smooth and $\strongConvexityConstant$-strongly convex, 
i.e., for each $\param \in \paramSet$ 
and $\var, \var' \in \R^\varDim$:
\begin{align} \nonumber
    \obj(\var', \param) &\geq \obj(\var, \param) + \nabla \obj(\var, \param)^\top (\var' - \var) + \textstyle\frac{1}{2}\strongConvexityConstant \Vert \var' - \var \Vert_2^2, \\ \nonumber
    \obj(\var', \param) &\leq \obj(\var, \param) + \nabla \obj(\var, \param)^\top (\var' - \var) + \textstyle\frac{1}{2}\smoothnessConstant \Vert \var' - \var \Vert_2^2.
\end{align}
\end{assumption}

\begin{assumption}
\label{Assump: Constraint Set is Convex and Closed}
$\constraintSet$ is convex and closed in $\R^\varDim$.
\end{assumption}

\begin{assumption}
\label{Assump: Parameter Set is Convex and Compact}
$\paramSet$ is convex and compact in $\R^\paramDim$. 
\end{assumption}

Under Assumption \ref{Assump: Objective Properties}, for each $\param \in \paramSet$, $\obj(\cdot, \param)$ has a unique minimizer over $\constraintSet$, denoted $\varOpt(\param)$ below \cite[Thm. 2.8]{WrightRecht2022OptimizationForDataAnalysis}.

\textbf{Problem Statement}:
We aim to construct an outer approximation of the minimizer set $\varOptSet$, defined by:
\begin{align}
\label{Eqn: Set of All Parameterized Minimizers}
    \varOptSet &:= \textstyle\bigcup_{\param \in \paramSet} \arg\min_\var \obj(\var, \param) = \big\{\varOpt(\param): \param \in \paramSet \big\}.
\end{align}

%% file: 4_Preliminaries.tex
\section{Preliminaries on PGD and SLS}
\label{sec: Preliminaries on Projected Gradient Descent (PGD) and System Level Synthesis (SLS)}

We introduce fundamental properties of the PGD dynamics (Sec. \ref{subsec: Iterative Gradient-Based Descent Methods}) and SLS output feedback error controller design.

\subsection{Iterative Gradient-Based Descent Methods}
\label{subsec: Iterative Gradient-Based Descent Methods}

We first review the convergence properties of
PGD dynamics on strongly convex problems.
Consider
$f_{\text{PGD}}: \R^\varDim \times \R^\paramDim \times \R \ra \R^\varDim$ defined below.
Let $(\ratePGD_k: k \geq 0)$ denote a steplength sequence, and $\proj_\constraintSet$ be the Euclidean projection onto $\constraintSet$,
i.e., $\proj_\constraintSet(\var) := \arg\min_{\var' \in \constraintSet} \Vert \var' - \var \Vert_2$ for each $\var \in \R^\varDim$\footnote{
As proved in \cite[Ch. 7]{WrightRecht2022OptimizationForDataAnalysis}, under Assumption \ref{Assump: Constraint Set is Convex and Closed}, the projection $\proj_\constraintSet(\var)$ of any $\var \in \R^\varDim$ onto $\constraintSet$ is a unique, well-defined element of $\constraintSet$.}.
Then, for any fixed parameter $\param \in \paramSet$
the PGD updates are:
{
\setlength{\abovedisplayskip}{3.5pt}
\setlength{\belowdisplayskip}{3.5pt}
\begin{align} 
\label{Eqn: PGD Updates Notation}
    \var_{k+1} &= \dynamicsPGD(\var_k, \param, \ratePGD_k) \\ \label{Eqn: Projected Gradient Descent (PGD)}
    &:= \proj_\constraintSet\left( \var_k - \ratePGD_k \nabla_\var \obj(\var_k, \param) \right).
\end{align}
}

A key result we will leverage from optimization theory is that, under Assumptions \ref{Assump: Objective Properties} and \ref{Assump: Constraint Set is Convex and Closed}, for any fixed $\param \in \paramSet$, the iterative PGD dynamics \eqref{Eqn: PGD Updates Notation} on $\obj$
converge exponentially to the global minimizer of $\obj$ over $\constraintSet$ \cite[Sec. 7.3.3]{WrightRecht2022OptimizationForDataAnalysis}.
Below, in Prop. \ref{Prop: Convergence of PGD}, we strengthen this result under our stated assumptions, 
by showing that the PGD updates \eqref{Eqn: Projected Gradient Descent (PGD)} converge at a rate 
agnostic to the specific parameter value
$\param$ in $\paramSet$.


\begin{proposition}[\textbf{Boundedness of $\varOptSet$}]
\label{Prop: Minimizer Set is Bounded}
Under Assumption \ref{Assump: Objective Properties} and \ref{Assump: Parameter Set is Convex and Compact}, $\varOptSet$ is bounded.
\end{proposition}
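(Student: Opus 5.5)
Fix a reference parameter $\param_0 \in \paramSet$, which exists assuming $\paramSet$ is nonempty (otherwise $\varOptSet=\emptyset$ is trivially bounded). Let $\var_0 := \varOpt(\param_0)$; note $\var_0 \in \constraintSet$. The plan is to bound $\Vert \varOpt(\param) - \var_0 \Vert_2$ uniformly over $\param \in \paramSet$ by combining strong convexity, the first-order optimality condition for $\varOpt(\param)$ over $\constraintSet$, and continuity of the gradient in $\param$ on the compact set $\paramSet$.

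\textbf{Step 1 (variational inequality).} For each $\param \in \paramSet$, optimality of $\varOpt(\param)$ over the convex set $\constraintSet$ gives
\[
\nabla_\var \obj(\varOpt(\param), \param)^\top (\var_0 - \varOpt(\param)) \geq 0 .
\]
Strong monotonicity of $\nabla_\var \obj(\cdot,\param)$ follows from adding the two strong-convexity inequalities of Assumption~\ref{Assump: Objective Properties} with the roles of the points swapped. Applied to the points $\varOpt(\param)$ and $\var_0$, it yields
\[
\strongConvexityConstant \Vert \varOpt(\param) - \var_0 \Vert_2^2 \leq \big(\nabla_\var \obj(\varOpt(\param),\param) - \nabla_\var \obj(\var_0,\param)\big)^\top (\varOpt(\param) - \var_0).
\]
By the variational inequality, the first gradient term contributes $\nabla_\var \obj(\varOpt(\param),\param)^\top(\varOpt(\param)-\var_0) \le 0$. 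Hence $\strongConvexityConstant \Vert \varOpt(\param)-\var_0\Vert_2^2 \le -\nabla_\var\obj(\var_0,\param)^\top(\varOpt(\param)-\var_0)$. Applying Cauchy--Schwarz gives
\[
\Vert \varOpt(\param) - \var_0 \Vert_2 \leq \frac{1}{\strongConvexityConstant}\Vert \nabla_\var \obj(\var_0, \param)\Vert_2 .
\]

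\textbf{Step 2 (uniform gradient bound).} By Assumption~\ref{Assump: Objective Properties}, the second partial derivatives of $\obj$ exist and are Lipschitz, so $\obj$ is $C^2$ jointly in $(\var,\param)$. In particular, the map $\param \mapsto \nabla_\var \obj(\var_0,\param)$ is continuous. Since $\paramSet$ is compact by Assumption~\ref{Assump: Parameter Set is Convex and Compact}, $G := \max_{\param\in\paramSet}\Vert \nabla_\var \obj(\var_0,\param)\Vert_2 < \infty$. Therefore $\varOptSet \subseteq \var_0 \oplus (G/\strongConvexityConstant)\, I_\varDim \ballUnit_2^\varDim$, which is bounded. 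Convexity of $\paramSet$ is not needed here; only compactness is used.

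\textbf{Main obstacle.} The delicate point is justifying continuity of $\nabla_\var\obj$ in $\param$ from the stated assumption. I will read ``all partial second derivatives'' as including the mixed $\var$-$\param$ derivatives, so that $\nabla_\var \obj$ is $C^1$ and hence continuous jointly. Under the weaker reading with only $\nabla^2_{\var\var}$, I would instead need continuity in $\param$ of the gradient. Two further points need brief notes. First, the optimality condition is used over the constraint set $\constraintSet$, which must be nonempty; closedness from Assumption~\ref{Assump: Constraint Set is Convex and Closed} is only needed for existence of the minimizer, which the paper already asserts. Second, if $\constraintSet = \R^\varDim$, Step~1 reduces to $\nabla_\var\obj(\varOpt(\param),\param)=0$, and the same bound holds.
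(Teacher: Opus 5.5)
Your proof is correct, and it supplies an argument the paper does not give. The paper only calls the proposition ``well-known'' and points to Sec.~2 of Wright--Recht, leaving the uniformity over $\theta$ implicit, even though that uniformity is the whole content of the claim.

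Your route is self-contained:
\begin{enumerate}
\item Combine the variational inequality at $\xi^\star(\theta)$ with strong monotonicity of $\nabla_\xi J(\cdot,\theta)$ to get $\|\xi^\star(\theta)-\xi_0\|_2 \le \frac{1}{m}\|\nabla_\xi J(\xi_0,\theta)\|_2$.
\item Use continuity of $\theta \mapsto \nabla_\xi J(\xi_0,\theta)$ on the compact set $\Theta$ to bound the right-hand side uniformly.
\end{enumerate}
This makes explicit which hypotheses actually matter: a strong-convexity constant $m$ uniform in $\theta$, regularity of $J$ in $\theta$, and compactness (but not convexity) of $\Theta$. Your joint reading of Assumption~\ref{Assump: Objective Properties}, including the mixed $\xi$--$\theta$ second partials, is the same reading the paper uses in the proof of Prop.~\ref{Prop: Lipschitz Continuity of Gradient of PGD Dynamics}.

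You were right to flag the regularity in $\theta$ as the main obstacle, since without it the claim fails. For example, $J(\xi,\theta)=\|\xi-g(\theta)\|_2^2$ with $g$ unbounded on $\Theta$ is $2$-strongly convex and $2$-smooth in $\xi$ for every $\theta$, yet its minimizer set is unbounded. A side benefit of your argument is the explicit radius $G/m$. It gives a computable bound on $\max_{\theta}\|\xi^\star(\theta)-\xi_0\|_2$, which is exactly the quantity needed for $\beta_k(\xi_0)$ in Thm.~\ref{Thm: Over-approximating Minimizer Set using Forward Reachable Sets}.

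One small point: Step~1 uses convexity of $S_{\mathcal{C}}$ (Assumption~\ref{Assump: Constraint Set is Convex and Closed}), which is not among the hypotheses listed in the proposition. It is implicitly needed anyway for $\xi^\star(\theta)$ to be a well-defined unique minimizer. If you want to avoid it, compare function values instead. Combining $J(\xi^\star(\theta),\theta)\le J(\xi_0,\theta)$ with the strong-convexity lower bound expanded at $\xi_0$ gives $\frac{m}{2}\|\xi^\star(\theta)-\xi_0\|_2^2 \le \|\nabla_\xi J(\xi_0,\theta)\|_2\,\|\xi^\star(\theta)-\xi_0\|_2$. This is the same conclusion with $2/m$ in place of $1/m$, and it holds for every minimizer over any nonempty $S_{\mathcal{C}}$ containing $\xi_0$.
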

Prop~\ref{Prop: Minimizer Set is Bounded} is well-known in optimization theory \cite[Sec. 2]{WrightRecht2022OptimizationForDataAnalysis}.


\begin{assumption}
\label{Assump: PGD Dynamics, Initial Iterate}
There is a known, bounded set $\varInitialSet \subseteq \constraintSet$ in which the initial iterate $\var_0$ of the PGD dynamics \eqref{Eqn: Projected Gradient Descent (PGD)} is fixed.
\end{assumption}

For any $k \geq 0$,
given $\var_0 \in \varInitialSet$, $\param \in \paramSet$, and $\ratePGD_{0:k-1} := (\ratePGD_0, \cdots, \ratePGD_{k-1}) \in \R^k$, let $\varMap_k(\var_0, \param, \ratePGD_{0:k-1})$ be the $k$-th iterate 
of the PGD dynamics \eqref{Eqn: Projected Gradient Descent (PGD)}.
For any $k \geq 0$, given $\var_0 \in \varInitialSet$ and $\ratePGD_{0:k-1} \in \R^k$, we define 
the forward reachable set for the PGD dynamics \eqref{Eqn: Projected Gradient Descent (PGD)} at iteration $k$ by:
\begin{align}
\label{Eqn: Forward Reachable Set, def}
    \varForwardReachableSet{k}(\var_0, \ratePGD_{0:k-1}) &:= \{ \varMap_k(\var_0, \param, \ratePGD_{0:k-1}): \param \in \paramSet \}.
\end{align}

\begin{proposition}
[\textbf{Uniform Convergence of PGD Dynamics}]
\label{Prop: Convergence of PGD}
Suppose Assumptions \ref{Assump: Objective Properties}, \ref{Assump: Constraint Set is Convex and Closed}, and \ref{Assump: Parameter Set is Convex and Compact} hold. Fix $\var_0 \in \varInitialSet$, let $\ratePGDMin, \ratePGDMax \in (0, \frac{2}{\smoothnessConstant})$ be given such that $\ratePGDMin \leq \ratePGDMax$, and define:
\begin{align}
    \label{eq:PGD_convergence_rate}
    \textstyle\convergenceRatePGD := \max_{\ratePGD \in [\ratePGDMin, \ratePGDMax]} \big\{\max\{|1 - \ratePGD \strongConvexityConstant|, |1 - \ratePGD \smoothnessConstant| \} \big\} < 1.
\end{align}
Fix $\ratePGD_k \in [\ratePGDMin, \ratePGDMax]$, $k \geq 0$.
Then, 
for any $\param \in \paramSet$, $k \in \N$:
\begin{align}
    \label{Eqn: Exponential Bound for PGD Convergence}
    \Vert \varMap_k(\var_0, \param, \ratePGD_{0:k-1}) - \varOpt(\param) \Vert_2 &\leq \convergenceRatePGD^k \cdot \Vert \var_0 - \varOpt(\param) \Vert_2. 
\end{align}
\end{proposition}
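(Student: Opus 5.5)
The plan is the standard contraction argument for projected gradient descent, made uniform in $\param$ by working directly with $\convergenceRatePGD$.

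\textbf{Fixed-point property.} Fix $\param \in \paramSet$ and write $\varOpt = \varOpt(\param)$. By the first-order optimality condition for minimizing the convex, differentiable function $\obj(\cdot,\param)$ over the closed convex set $\constraintSet$ (Assumption \ref{Assump: Constraint Set is Convex and Closed}),
\begin{align*}
\nabla_\var \obj(\varOpt,\param)^\top(\var - \varOpt) \geq 0 \quad \text{for all } \var \in \constraintSet .
\end{align*}
The variational characterization of the Euclidean projection then gives
\begin{align*}
\proj_\constraintSet\big(\varOpt - \ratePGD \nabla_\var \obj(\varOpt,\param)\big) = \varOpt \quad \text{for every } \ratePGD > 0 .
\end{align*}
So $\varOpt$ is a fixed point of $\dynamicsPGD(\cdot,\param,\ratePGD)$ for every admissible step size.

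\textbf{One-step contraction.} Fix $k$ and write $\ratePGD = \ratePGD_k$. Since the projection onto a closed convex set is nonexpansive,
\begin{align*}
\Vert \var_{k+1} - \varOpt \Vert_2 \leq \Vert (\var_k - \varOpt) - \ratePGD\big(\nabla_\var \obj(\var_k,\param) - \nabla_\var \obj(\varOpt,\param)\big) \Vert_2 .
\end{align*}
Assumption \ref{Assump: Objective Properties} gives second derivatives, so the fundamental theorem of calculus applies along the segment from $\varOpt$ to $\var_k$:
\begin{align*}
\nabla_\var \obj(\var_k,\param) - \nabla_\var \obj(\varOpt,\param) = H(\var_k - \varOpt), \qquad H := \int_0^1 \nabla^2_{\var\var}\obj\big(\varOpt + t(\var_k-\varOpt),\param\big)\,dt .
\end{align*}
$L$-smoothness and $m$-strong convexity on all of $\R^\varDim$ imply that every Hessian has spectrum in $[\strongConvexityConstant,\smoothnessConstant]$. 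The averaged matrix $H$ is symmetric, so it inherits this spectrum bound. Hence $I - \ratePGD H$ is symmetric with eigenvalues in $[1-\ratePGD\smoothnessConstant,\, 1-\ratePGD\strongConvexityConstant]$. This gives
\begin{align*}
\Vert I - \ratePGD H\Vert_2 \leq \max\{|1-\ratePGD\strongConvexityConstant|,\, |1-\ratePGD\smoothnessConstant|\} \leq \convergenceRatePGD ,
\end{align*}
where the last inequality uses $\ratePGD \in [\ratePGDMin,\ratePGDMax]$. Therefore $\Vert \var_{k+1}-\varOpt\Vert_2 \leq \convergenceRatePGD \Vert \var_k - \varOpt\Vert_2$. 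If one prefers to avoid Hessians, the same bound follows from co-coercivity of $\nabla_\var \obj(\cdot,\param) - \strongConvexityConstant I$, but the Hessian route is cleaner here.

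\textbf{Why $\convergenceRatePGD < 1$.} The map $\ratePGD \mapsto \max\{|1-\ratePGD \strongConvexityConstant|, |1-\ratePGD \smoothnessConstant|\}$ is continuous. On $(0, 2/\smoothnessConstant)$ it is strictly less than $1$: since $0 < \ratePGD\strongConvexityConstant \leq \ratePGD\smoothnessConstant < 2$, both $1-\ratePGD \strongConvexityConstant$ and $1-\ratePGD \smoothnessConstant$ lie in $(-1,1)$. The maximum over the compact interval $[\ratePGDMin,\ratePGDMax] \subset (0, 2/\smoothnessConstant)$ is therefore attained and strictly below $1$.

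\textbf{Conclusion.} Induction on $k$ yields \eqref{Eqn: Exponential Bound for PGD Convergence}. Uniformity holds because $\strongConvexityConstant,\smoothnessConstant$ do not depend on $\param$, and hence neither does $\convergenceRatePGD$.

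The only delicate point is the fixed-point and nonexpansiveness step, and both are standard properties of projections onto closed convex sets. I expect no real obstacle beyond stating these cleanly.
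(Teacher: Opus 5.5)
Your proposal is correct and follows the same route as the paper. Both arguments make $\xi^\star(\theta)$ a fixed point of the PGD map, use nonexpansiveness of $\mathrm{proj}_{S_{\mathcal{C}}}$ to reduce to the unprojected gradient step, bound that step's contraction by $\max\{|1-\alpha_j m|, |1-\alpha_j L|\} \leq \gamma$, and iterate over $k$. The only difference is that the paper cites Wright and Recht (Prop.~7.8 and Eqn.~(7.8)) for the fixed-point property and the contraction inequality, whereas you derive both explicitly (via the projection's variational inequality and the averaged-Hessian bound) and additionally verify $\gamma < 1$, which the paper only asserts.
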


\begin{proof}
Below, we fix $\var_0 \in \varInitialSet$, $\param \in \paramSet$, and $\ratePGD_{0:k-1} \in [\ratePGDMin, \ratePGDMax]$, and write $\var_j = \varMap_j(\var_0, \param, \ratePGD_{0:j-1})$ for $j \in [0, k]$.
From \cite[Prop. 7.8]{WrightRecht2022OptimizationForDataAnalysis}, $\forall \ \param \in \paramSet$ and $\ratePGD_k \geq 0$, $\varOpt(\param)$ is the unique fixed point of the PGD updates \eqref{Eqn: Projected Gradient Descent (PGD)}. Thus, for any $\param \in \paramSet$ and $j \geq 0$:
\begin{align} \nonumber
    \Vert \var_{j+1} -  &\varOpt(\param) \Vert_2  = \big\Vert \proj_\constraintSet\big(\var_j - \ratePGD_j \nabla \obj(\var_j, \param) \big) \\ \nonumber
    &\hspace{5mm} - \proj_\constraintSet\big(\varOpt(\param) - \ratePGD_j \nabla \obj\big(\varOpt(\param), \param \big) \big) \big\Vert_2 \\
    \label{Eqn: Apply Recht and Wright, for Exponential Bound for PGD convergence}
    \leq &\max\big\{ |1 - \ratePGD_j \strongConvexityConstant|, |1 - \ratePGD_j \smoothnessConstant| \big\} \cdot \Vert \var_j - \varOpt(\param) \Vert_2 \\ \nonumber
    \leq &\convergenceRatePGD \cdot \Vert \var_j - \varOpt(\param) \Vert_2,
\end{align}
where \eqref{Eqn: Apply Recht and Wright, for Exponential Bound for PGD convergence} follows from \cite[Eqn. (7.8)]{WrightRecht2022OptimizationForDataAnalysis}. Recursively applying the above for $j \in [0, k-1]$, we obtain \eqref{Eqn: Exponential Bound for PGD Convergence}, as desired.
\end{proof}

Next, we use Prop. \ref{Prop: Convergence of PGD} to recast the problem of over-approximating $\varOptSet$ as the computation of forward reachable sets for PGD \eqref{Eqn: PGD Updates Notation}.
The following theorem asserts that $\varOptSet$ can be over-approximated by fixing any $\var_0 \in \varInitialSet$ and $\ratePGD_{0:k-1} \in [\ratePGDMin, \ratePGDMax]^k$, computing $\varForwardReachableSet{k}(\var_0, \ratePGD_{0:k-1})$, and inflating $\varForwardReachableSet{k}(\var_0, \ratePGD_{0:k-1})$ (in the 2-norm) by the $\convergenceRatePGD$-exponentially shrinking bound furnished by Prop. \ref{Prop: Convergence of PGD}.

\begin{theorem}
(\textbf{Over-Approximating $\varOptSet$ using the Forward Reachable Sets of PGD Dynamics})
\label{Thm: Over-approximating Minimizer Set using Forward Reachable Sets}
Fix $\var_0 \in \varInitialSet$ and $\ratePGD_{0:k-1} \in [\ratePGDMin, \ratePGDMax]^k$ arbitrarily, and define $\convergenceRadius_k(\var_0) := \convergenceRatePGD^k \cdot \max_{\param \in \paramSet} \Vert \varOpt(\param) - \var_0 \Vert_2$ for each $k \geq 0$. Then:
\begin{align}
    \label{Eqn: Forward Reachable Sets approach Minimizer Sets}
    \dist\big(\varOptSet, \varForwardReachableSet{k}(\var_0, \ratePGD_{0:k-1}) \big) \leq \convergenceRadius_k(\var_0), \hspace{5mm} \forall \ k \geq 0.
\end{align}
\end{theorem}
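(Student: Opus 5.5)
The plan is to prove the Hausdorff bound by building an explicit pairing between the two sets, indexed by the parameter. Both sets are images of the same index set $\paramSet$. Each $\param \in \paramSet$ gives the minimizer $\varOpt(\param) \in \varOptSet$ and the iterate $\varMap_k(\var_0, \param, \ratePGD_{0:k-1}) \in \varForwardReachableSet{k}(\var_0, \ratePGD_{0:k-1})$. It therefore suffices to show that every paired point lies within $\convergenceRadius_k(\var_0)$ of its partner, uniformly in $\param$. That yields both inclusions required by the definition of $\dist$.

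First I would confirm that $\convergenceRadius_k(\var_0)$ is a finite, well-defined quantity. By Prop.~\ref{Prop: Minimizer Set is Bounded}, $\varOptSet$ is bounded, so $\sup_{\param \in \paramSet} \Vert \varOpt(\param) - \var_0 \Vert_2 < \infty$. To replace the supremum by a maximum, I would note that $\param \mapsto \varOpt(\param)$ is continuous, and then use compactness of $\paramSet$ from Assumption~\ref{Assump: Parameter Set is Convex and Compact}. Continuity follows from the fixed-point characterization $\varOpt(\param) = \proj_\constraintSet(\varOpt(\param) - \ratePGD \nabla_\var \obj(\varOpt(\param), \param))$. Taking $\ratePGD \in [\ratePGDMin, \ratePGDMax]$, this map is a $\convergenceRatePGD$-contraction in $\var$ and is continuous in $\param$, because $\nabla_\var \obj$ is continuous. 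A standard contraction argument then gives $\Vert \varOpt(\param) - \varOpt(\param') \Vert_2 \le \frac{\ratePGD}{1-\convergenceRatePGD}\Vert \nabla_\var \obj(\varOpt(\param'),\param) - \nabla_\var \obj(\varOpt(\param'),\param')\Vert_2$. If this step proves delicate, the argument goes through verbatim with $\sup$ in place of $\max$, so it is not essential.

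The core step is an application of Prop.~\ref{Prop: Convergence of PGD}. For each $\param \in \paramSet$,
\begin{align*}
\Vert \varMap_k(\var_0, \param, \ratePGD_{0:k-1}) - \varOpt(\param) \Vert_2 \le \convergenceRatePGD^k \Vert \var_0 - \varOpt(\param) \Vert_2 \le \convergenceRadius_k(\var_0).
\end{align*}
This bound gives both inclusions. For the first, any $s \in \varOptSet$ has the form $s = \varOpt(\param)$, and its partner $\varMap_k(\var_0,\param,\ratePGD_{0:k-1})$ lies in $\varForwardReachableSet{k}$. Hence $\varOptSet \subseteq \varForwardReachableSet{k} \oplus \convergenceRadius_k(\var_0) \ballUnit_2^\varDim$. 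For the second, any point of $\varForwardReachableSet{k}$ is $\varMap_k(\var_0,\param,\ratePGD_{0:k-1})$ for some $\param$. Its partner $\varOpt(\param) \in \varOptSet$ is within the same distance, so $\varForwardReachableSet{k} \subseteq \varOptSet \oplus \convergenceRadius_k(\var_0)\ballUnit_2^\varDim$.

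Finally, the definition of $\dist$ takes an infimum over $\epsilon > 0$. Both inclusions hold at $\epsilon = \convergenceRadius_k(\var_0)$, and therefore for every $\epsilon \ge \convergenceRadius_k(\var_0)$, so the infimum is at most $\convergenceRadius_k(\var_0)$. The degenerate case $\convergenceRadius_k(\var_0) = 0$ needs a separate check. Here both inclusions hold for every $\epsilon > 0$, so the infimum is $0$ and the bound still holds. I expect no real obstacle beyond the minor well-posedness of the $\max$. The key observation is simply that the pairing through the shared parameter $\param$ controls both directions of the Hausdorff distance at once.
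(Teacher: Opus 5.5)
Your proposal is correct and follows the same route as the paper. Like the paper, you apply Prop.~\ref{Prop: Convergence of PGD} for each $\param \in \paramSet$, bound the right-hand side uniformly by $\convergenceRadius_k(\var_0)$, and use the common parametrization of $\varOptSet$ and $\varForwardReachableSet{k}(\var_0, \ratePGD_{0:k-1})$ by $\param$ to obtain both Hausdorff inclusions; your explicit two-inclusion argument, the attainment of the $\max$ via continuity of $\varOpt$, and the degenerate case $\convergenceRadius_k(\var_0) = 0$ simply fill in rigor that the paper compresses into ``readily follows.''
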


\begin{proof}
Fix $\var_0 \in \varInitialSet$ and $\ratePGD_{0:k-1} \in [\ratePGDMin, \ratePGDMax]^k$.
By maximizing the right-hand side of \eqref{Eqn: Exponential Bound for PGD Convergence} over all $\param \in \paramSet$, we obtain that $\Vert \varMap_k(\var_0, \param, \ratePGD_{0:k-1}) - \varOpt(\param) \Vert_2 \leq \convergenceRadius_k(\var_0)$ for each $\param \in \paramSet$, from which 
\eqref{Eqn: Forward Reachable Sets approach Minimizer Sets}
readily follows.  
\end{proof}

\begin{remark}
\label{Remark: Upper bound for convergence radius}
The upper bound $\convergenceRadius_k(\var_0)$, though $\var_0$-dependent, is continuous in $\var_0$ and thus uniformly bounded over the bounded set $\varInitialSet$ under Assumption \ref{Assump: PGD Dynamics, Initial Iterate}.


\end{remark}


\subsection{SLS-based Output Feedback Error Controller Design}
\label{subsec: System-Level Synthesis (SLS)-based Output Feedback Error Controller Design}


To over-approximate
$\varOptSet$
via Thm. \ref{Thm: Over-approximating Minimizer Set using Forward Reachable Sets}, we leverage the SLS approach in~\cite{Leeman2025RobustNonlinearOptimalControlviaSLS, leeman2026vision} to compute a forward reachable set for the PGD dynamics \eqref{Eqn: Projected Gradient Descent (PGD)}, with uncertainty over 
$\param \in \paramSet$ modeled as initial state uncertainty. 
Although many methods exist for computing reachable sets~\cite{Bansal2017HJBReachability, immrax}, we adopt the 
SLS-based methods
in~\cite{Leeman2025RobustNonlinearOptimalControlviaSLS, leeman2026vision} due to their tractability for high-dimensional system dynamics \cite{Leeman2024FastSLS}. 

We consider the following dynamics over a horizon $[0, \iterationHorizon]$, with variable-parameter tuples $\state := (\var, \param)$ as states, PGD steplengths $\ratePGD$ as control inputs, and variables $\var$ as outputs:
\begin{subequations}
\label{Eqn: True Dynamics}
\begin{alignat}{2} \label{Eqn: True Dynamics, Variable}
    \var_{k+1} &= \dynamicsPGD(\var_k, \param_k, \ratePGD_k),
    \hspace{5mm} &&\forall \ k \in [0, \iterationHorizon - 1], \\ \label{Eqn: True Dynamics, Parameter}
    \param_{k+1} &= \param_k, \hspace{5mm} &&\forall \ k \in [0, \iterationHorizon - 1], \\
    \outputs_k &= \var_k, \hspace{5mm} &&\forall \ k \in [0, \iterationHorizon - 1].
\end{alignat}
\end{subequations}

For notational ease, we define $\dynamicsFullState: \R^{\varDim} \times \R^\paramDim \times \R \ra \R^{\varDim + \paramDim}$ by 
$\dynamicsFullState(\var, \param, \ratePGD) := \big(\dynamicsPGD(\var, \param, \ratePGD), \param \big)$, simplifying \eqref{Eqn: True Dynamics, Variable}-\eqref{Eqn: True Dynamics, Parameter} to:
\begin{align}
    \label{Eqn: True Dynamics, Full State}
    \state_{k+1} &= \dynamicsFullState(\state_k, \ratePGD_k), \hspace{5mm} \forall \ k \in [0, \iterationHorizon - 1].
\end{align}
We also write $\stateStacked := (\state_0, \cdots, \state_\iterationHorizon) \in \R^{(\varDim + \paramDim)(\iterationHorizon + 1)}$, and for any $k_1, k_2 \in [0, \iterationHorizon]$ with $k_1 \leq k_2$, we write $\stateStacked_{k_1:k_2} := (\state_{k_1}, \cdots, \state_{k_2}) \in \R^{(\varDim + \paramDim)(k_2 - k_1 + 1)}$.

To satisfy Prop. \ref{Prop: Convergence of PGD} and ensure the convergence of PGD dynamics, we enforce that 
$(\ratePGD_k: k \in [0, \iterationHorizon])$ satisfy:
\begin{align} 
\label{Eqn: Steplength bound constraints}
    \ratePGDMin \leq \ratePGD_k \leq \ratePGDMax, \hspace{5mm} \forall \ k \in [0, \iterationHorizon-1].
\end{align}
Define $\ratePGDBoundSlope_1 := (\zero_{\varDim + \paramDim}, 1), \ratePGDBoundSlope_2 := (\zero_{\varDim + \paramDim}, -1) \in \R^{\varDim + \paramDim + 1}$ and $\ratePGDBoundOffset_1 := - \ratePGDMax, \ratePGDBoundOffset_2 := \ratePGDMin$. 
We can then rewrite \eqref{Eqn: Steplength bound constraints} as $\ratePGDBoundSlope_p^\top (\state_k, \ratePGD_k) + \ratePGDBoundOffset_p \leq 0$ for each $p \in [2]$.

\begin{remark}
Additional affine constraints of the form \say{$\ratePGDBoundSlope_p^\top (\state_k, \ratePGD_k) + \ratePGDBoundOffset_p \leq 0$} 
may
be enforced
to bound the distance between each variable iterate $\var_k$ and the minimizer set $\varOptSet$.
\end{remark}

To construct reachable sets for the PGD dynamics $\dynamicsFullState$ in \eqref{Eqn: True Dynamics} over $\param \in \paramSet$ with low conservativeness,
at each iteration $k \in [0, N]$, we aim to select appropriate steplengths $\ratePGD_k \in [\ratePGDMin, \ratePGDMax]$ across the iteration horizon $[0, \iterationHorizon]$ as functions of past output realizations 
$\varStacked_{0:k}$, i.e., design a causal feedback law $\feedbackMap_k: \R^{\varDim(k+1)} \ra \R$, and set 
$\ratePGD_k := \feedbackMap_k(\stateStacked_{0:k})$.
Although $\varOptSet$ can also be outer-approximated via computing forward reachable sets for the PGD dynamics with \textit{fixed} steplengths,
doing so may result in more conservative outer approximations of $\varOptSet$.

\looseness-1Since optimizing over all causal feedback laws is in general intractable, we mimic \cite{leeman2026vision, Leeman2025RobustNonlinearOptimalControlviaSLS},
which instead compute a \textit{nominal state-control trajectory} $(\stateNominalStacked, \ratePGDNominalStacked)$ and an \textit{affine causal output error feedback law} mitigating the deviation of the trajectory rollouts from $(\stateNominalStacked, \ratePGDNominalStacked)$.
We first reformulate the true nonlinear system \eqref{Eqn: True Dynamics}, encoding initial uncertainty over the parameter $\param_0$, 
as the sum of a nominal nonlinear system \textit{without initial state uncertainty}~\eqref{Eqn: Nominal Dynamics} and linear time-varying (LTV) error dynamics~\eqref{Eqn: State Error LTV Dynamics}.
We then design (a) state and control trajectories $\stateNominalStacked
:= (\varNominal_0, \paramNominal_0, \cdots, \varNominal_\iterationHorizon, \paramNominal_\iterationHorizon) 
\in \R^{(\varDim + \paramDim)(\iterationHorizon + 1)}$ and $\ratePGDNominalStacked := (\ratePGDNominal_0, \cdots, \ratePGDNominal_\iterationHorizon) \in \R^{\iterationHorizon + 1}$,
satisfying the nominal dynamics:
\begin{align}
\label{Eqn: Nominal Dynamics}
    \stateNominal_{k+1} &= \dynamicsFullState(\stateNominal_k, \ratePGDNominal_k), \hspace{5mm} \forall \ k \in [0, \iterationHorizon - 1].
\end{align}
and (b) a set of causal affine output error feedback gains $\setFeedbackGains := \{\feedbackGain_{k,j} \in \R^{1 \times \varDim}, \ k,j \in [0,\iterationHorizon], k \geq j \}$
parameterizing the following causal affine output error feedback map:
\begin{align} \nonumber
    \ratePGD_k &= \feedbackMap_k(\outputStacked_{0:k}) \\
    \label{Eqn: Affine Output Feedback Map}
    &= \ratePGDNominal_k + \textstyle\sum_{j=0}^k \feedbackGain_{k,j}(\var_{k-j} - \varNominal_{k-j}),
    \hspace{3mm} \forall \ k \in [0, \iterationHorizon].
\end{align}

\begin{remark}
\label{Remark: Choice of Initial Nominal Parameter, paramNominal 0}
Designing $\stateNominalStacked$ requires choosing
an initial nominal state $\varNominal_0$, which we assume to be the true initial state $\var_0$ for simplicity, and an initial nominal parameter $\paramNominal_0$, which is then fixed across iterations per \eqref{Eqn: True Dynamics, Parameter}.
In practice, 
$\paramNominal_0 \in \paramSet$ can be selected to minimize the worst-case parameter deviation over $\paramSet$, i.e., $\paramNominal_0 \in \arg\min_{\param \in  \paramSet} \max_{\param' \in \paramSet} \Vert \param - \param' \Vert_2$, .
\end{remark}


%% file: 5_Methods.tex
\section{Methods}
\label{sec: Methods}

In this section, we outer-approximate the minimizer set $\varOptSet$ using Thm. \ref{Thm: Over-approximating Minimizer Set using Forward Reachable Sets} by computing reachable sets for the PGD dynamics with SLS.
For differentiable PGD dynamics, we formulate a linear time-varying (LTV) system that describes the propagation of the error between nominal and true trajectories
across iterations (Sec. \ref{subsec: LTV Error System for Smooth PGD Dynamics}), and construct error bounds across iterations (Sec. \ref{subsec: Linearization Error Bounds}).
We then generalize our methods
to the setting in which the PGD dynamics are non-differentiable
via a smoothing technique
(Sec. \ref{subsec: Smoothing PGD Dynamics when Non-Differentiable}).


\subsection{LTV Error System for Smooth PGD Dynamics}
\label{subsec: LTV Error System for Smooth PGD Dynamics}


We
derive the LTV error system for the true dynamics \eqref{Eqn: True Dynamics},
as a step towards designing error feedback gains,
for the case in which $\constraintSet$ is $\R^\varDim$ or an affine subspace of $\R^\varDim$.
In this case, the projection operator $\proj_\constraintSet(\cdot)$ is affine, 
and thus $\dynamicsFullState$ in \eqref{Eqn: True Dynamics}
is differentiable and satisfies
Assumption \ref{Assump: Objective Properties}.
In Sec. \ref{subsec: Smoothing PGD Dynamics when Non-Differentiable}, we study the general setting where $\dynamicsFullState$ may be non-differentiable.

\looseness=-1The LTV error system admits, across iterations $k \in [0, \iterationHorizon]$, the state errors 
$\stateError_k := \state_k - \stateNominal_k$
as states and steplength errors $\ratePGDError_k := \ratePGD_k - \ratePGDNominal_k$ 
as controls. 
Its gain and actuation matrices for each $k \in [0, \iterationHorizon]$ are computed
via the Jacobian linearization of \eqref{Eqn: True Dynamics, Full State} at the nominal state-control 
pair $(\stateNominal_k, \ratePGDNominal_k)$
via: 
{
\setlength{\abovedisplayskip}{3.5pt}
\setlength{\belowdisplayskip}{3.5pt}
\begin{subequations}
\label{Eqn: Jacobian Matrices}
\begin{align}
    \label{Eqn: Jacobian Matrix, State}
    \jacobianState_k(\stateNominal_k, \ratePGDNominal_k) &:= \textstyle\frac{\partial \dynamicsFullState}{\partial \state}(\stateNominal_k, \ratePGDNominal_k), \\
    \label{Eqn: Jacobian Matrix, Control}
    \jacobianControl_k(\stateNominal_k, \ratePGDNominal_k) &:= \textstyle\frac{\partial \dynamicsFullState}{\partial \ratePGD}(\stateNominal_k, \ratePGDNominal_k),
\end{align}
\end{subequations}
}
\hspace{-1mm}and the \textit{linearization error} $\linearizationError_k(\state_k, \ratePGD_k, \stateNominal_k, \ratePGDNominal_k) \in \R^{\varDim + \paramDim}$ as:
{
\setlength{\abovedisplayskip}{3.5pt}
\setlength{\belowdisplayskip}{3.5pt}
\begin{align}
\label{Eqn: Linearization Error}
    \linearizationError_k(\state_k, \ratePGD_k, \stateNominal_k, \ratePGDNominal_k) &= \dynamicsFullState(\state_k, \ratePGD_k) - \dynamicsFullState(\stateNominal_k, \ratePGDNominal_k) \\ \nonumber 
    &\hspace{5mm} - \jacobianState_k(\stateNominal_k, \ratePGDNominal_k) \stateError_k - \jacobianControl_k(\stateNominal_k, \ratePGDNominal_k) \ratePGDError_k.
\end{align}
}
\hspace{-2mm}We then use
\eqref{Eqn: Jacobian Matrices} and \eqref{Eqn: Linearization Error} to describe the propagation of parameter errors as the following LTV system
while expressing
each linearization error as a disturbance\footnote{
In Sec. \ref{subsec: Smoothing PGD Dynamics when Non-Differentiable}, which considers the setting of non-differentiable PGD dynamics, the disturbance $\disturbance_k$ will also incorporate the approximation error between the PGD dynamics and a differentiable proxy.
}
$\disturbance_k$:
{
\setlength{\abovedisplayskip}{3.5pt}
\setlength{\belowdisplayskip}{3.5pt}
\begin{align} \label{Eqn: Disturbance}
    \disturbance_k &= \linearizationError_k(\state_k, \ratePGD_k, \stateNominal_k, \ratePGDNominal_k), \\
    \nonumber
    \stateError_{k+1} &:= \state_{k+1} - \stateNominal_{k+1} = \dynamicsFullState(\state_k, \ratePGD_k) - \dynamicsFullState(\stateNominal_k, \ratePGDNominal_k) \\ \label{Eqn: State Error LTV Dynamics}
    &= \jacobianState_k(\stateNominal_k, \ratePGDNominal_k) \stateError_k + \jacobianControl_k(\stateNominal_k, \ratePGDNominal_k) \ratePGDError_k + \disturbance_k,
\end{align}
}
\hspace{-1.5mm}Meanwhile, we define the \textit{control error} $\ratePGDError_k := \ratePGD_k - \ratePGDNominal_k$ for each $k \in [0, \iterationHorizon]$ and the \textit{output matrix} $\outputMatrix := \begin{bmatrix}
    I_\varDim & O_{\varDim + \paramDim}
\end{bmatrix} \in \R^{\varDim \times (\varDim + \paramDim)}$, and derive from \eqref{Eqn: Affine Output Feedback Map} that:
{
\setlength{\abovedisplayskip}{3.5pt}
\setlength{\belowdisplayskip}{3.5pt}
\begin{align}
    \label{Eqn: Control Error as affine function of State Errors}
    \ratePGDError_k 
    &= \textstyle\sum_{j=0}^k \feedbackGain_{k,j} \outputMatrix \stateError_{k-j},
    \hspace{2mm} \forall \ k \in [0, \iterationHorizon].
\end{align}
}


A key insight of the SLS framework is that the design of the output feedback gains $\setFeedbackGains$ can be recast as the design of a \textit{system response}, mapping disturbances $\disturbanceStacked$ directly to state and control errors.
For brevity, we stack the LTV system dynamics \eqref{Eqn: State Error LTV Dynamics} and \eqref{Eqn: Control Error as affine function of State Errors} across iterations $k \in [0, \iterationHorizon]$. 
Let $\disturbanceStacked := (\stateError_0, \disturbance_0, \cdots, \disturbance_{\iterationHorizon-1})$, and define 
$\jacobianStateStacked$, $\jacobianControlStacked$, $\outputMatrixStacked$, $\feedbackGainStacked$, and $\downshift$ by:
\footnote{For notational simplicity, the explicit dependence of each $\jacobianState$ and $\jacobianControl$ on the nominal state-control pair $(\state_k, \ratePGD_k)$ is omitted from mention in \eqref{Eqn: Stacking State Jacobians}-\eqref{Eqn: Stacking Control Jacobians}.
}
{
\setlength{\abovedisplayskip}{1pt}
\setlength{\belowdisplayskip}{3.5pt}
\begin{subequations}
\label{Eqn: Stacking Matrices}
\begin{align}
    \label{Eqn: Stacking State Jacobians}
    \jacobianStateStacked &:= \blockDiag\{ \jacobianState_0, \cdots, \jacobianState_{\iterationHorizon-1}, O_{\varDim + \paramDim} \}, \\
    \label{Eqn: Stacking Control Jacobians}
    \jacobianControlStacked &:= \blockDiag\{ \jacobianControl_0, \cdots, \jacobianControl_{\iterationHorizon-1}, \zero_{\varDim + \paramDim} \}, \\
    \label{Eqn: Stacking Output Matrices}
    \outputMatrixStacked &:= \blockDiag\{ \outputMatrix, \cdots, \outputMatrix, \outputMatrix \}, \\
    \label{Eqn: Stacking Feedback Gains}
    \feedbackGainStacked &:= \begin{bmatrix}
        \feedbackGain_{0, 0} & O & \cdots & O \\[-7pt]
        \feedbackGain_{1, 0} & \feedbackGain_{1, 1} & \ddots & O \\[-3pt]
        \vdots & \vdots & \ddots & \vdots \\[-7pt]
        \feedbackGain_{\iterationHorizon, 0} & \feedbackGain_{\iterationHorizon, 1} & \ddots & \feedbackGain_{\iterationHorizon, \iterationHorizon}
    \end{bmatrix}, \\
    \label{Eqn: Downshift Operator}
    \downshift &:= \begin{bmatrix}
        O_{(\varDim + \paramDim) \times (\varDim + \paramDim)\iterationHorizon} & O_{\varDim + \paramDim} \\
        I_{(\varDim + \paramDim)\iterationHorizon} & O_{(\varDim + \paramDim)\iterationHorizon \times (\varDim + \paramDim)} 
    \end{bmatrix}
\end{align}
\end{subequations}
}

We use \eqref{Eqn: Stacking Matrices} to rewrite \eqref{Eqn: State Error LTV Dynamics}-\eqref{Eqn: Control Error as affine function of State Errors} across $k \in [0, \iterationHorizon]$ as:%
{
\setlength{\abovedisplayskip}{3.5pt}
\setlength{\belowdisplayskip}{3.5pt}
\begin{subequations}
\label{Eqn: State and Control Errors, Stacked}
\begin{align}
    \label{Eqn: State Errors, Stacked}
    \stateErrorStacked &= \downshift \jacobianStateStacked  \stateErrorStacked + \downshift 
    \jacobianControlStacked \ratePGDErrorStacked + \disturbanceStacked, \\
    \label{Eqn: Control Errors, Stacked}
    \ratePGDErrorStacked &= \feedbackGainStacked \outputMatrixStacked \stateErrorStacked,
\end{align}
\end{subequations}
}
\hspace{-1mm}which can be further written as:
{
\setlength{\abovedisplayskip}{4pt}
\setlength{\belowdisplayskip}{4pt}
\begin{align}
    \label{Eqn: State and Control Errors, Stacked, written using System Responses}
    \begin{bmatrix}
        \stateErrorStacked \\ \ratePGDErrorStacked
    \end{bmatrix}
    &= \begin{bmatrix}
        \systemResponse_{\state \disturbance} & \systemResponse_{\state \outputNoise} \\
        \systemResponse_{\ratePGD \disturbance} & \systemResponse_{\ratePGD \outputNoise}
    \end{bmatrix}
    \begin{bmatrix}
        \disturbanceStacked \\ \zero_{\varDim}
    \end{bmatrix} := \systemResponse \begin{bmatrix}
        \disturbanceStacked \\ \zero_{\varDim}
    \end{bmatrix},
\end{align}
}
\hspace{-1.5mm}where the constituent blocks of the \textit{system response} $\systemResponse \in \R^{(\varDim + \paramDim + 1)(\iterationHorizon+1) \times (2\varDim + \paramDim)(\iterationHorizon+1)}$ are:
{
\setlength{\abovedisplayskip}{3.5pt}
\setlength{\belowdisplayskip}{3.5pt}
\begin{subequations} \label{Eqn: System Responses from Output Feedback Gains}
\begin{align} 
    \label{Eqn: System Response from Feedback Gains (State, Disturbance)}
    \systemResponse_{\state \disturbance} &:= (I_{(\varDim + \paramDim)\iterationHorizon} - \downshift \jacobianStateStacked - \downshift \jacobianControlStacked \feedbackGainStacked \outputMatrixStacked)^{-1} \\ \label{Eqn: System Response from Feedback Gains (State, Output Noise)}
    \systemResponse_{\state \outputNoise} &:= (I_{(\varDim + \paramDim)\iterationHorizon} - \downshift \jacobianStateStacked - \downshift \jacobianControlStacked \feedbackGainStacked \outputMatrixStacked)^{-1} \downshift \jacobianControlStacked \feedbackGainStacked \\ 
    \label{Eqn: System Response from Feedback Gains (Input, Disturbance)}
    \systemResponse_{\ratePGD \disturbance} &:= \feedbackGainStacked \outputMatrixStacked (I - \downshift \jacobianStateStacked - \downshift \jacobianControlStacked \feedbackGainStacked \outputMatrixStacked)^{-1} \\ 
    \label{Eqn: System Response from Feedback Gains (Input, Output Noise)}
    \systemResponse_{\ratePGD \outputNoise} &:= \feedbackGainStacked \outputMatrixStacked (I - \downshift \jacobianStateStacked - \downshift \jacobianControlStacked \feedbackGainStacked \outputMatrixStacked)^{-1} \downshift \jacobianControlStacked \feedbackGainStacked + \feedbackGainStacked.
\end{align}
\end{subequations}
}
\hspace{-1mm}Conversely, from \eqref{Eqn: System Responses from Output Feedback Gains}, we also obtain that:
{
\setlength{\abovedisplayskip}{3.5pt}
\setlength{\belowdisplayskip}{3.5pt}
\begin{align} \label{Eqn: Feedback Error Gains from System Response}
    \feedbackGainStacked &= \systemResponse_{\ratePGD \outputNoise} - \systemResponse_{\ratePGD \disturbance} \systemResponse_{\state \disturbance}^{-1} \systemResponse_{\state \outputNoise}.
\end{align}
}

As proved in \cite{anderson2019levelsynthesis, ZhouTzoumas2023SafeControlofPartiallyObservedLTVSystems} and described below, the system response $\systemResponse$ parameterizes all causal output feedback gains $\feedbackGainStacked$.

\begin{proposition}\cite[Prop. 1]{ZhouTzoumas2023SafeControlofPartiallyObservedLTVSystems}
If there exist output feedback gains $\feedbackGainStacked$ of the form \eqref{Eqn: Stacking Feedback Gains} satisfying \eqref{Eqn: State and Control Errors, Stacked}, then $\systemResponse_{\state \disturbance}$, $ \systemResponse_{\state \outputNoise}$, $ \systemResponse_{\ratePGD \disturbance}$, and $\systemResponse_{\ratePGD \outputNoise}$, as computed by \eqref{Eqn: System Responses from Output Feedback Gains},
satisfy: 
{
\setlength{\abovedisplayskip}{3.5pt}
\setlength{\belowdisplayskip}{3.5pt}
\begin{subequations} \label{Eqn: System Responses, Affine Constraints}
\begin{align} \label{Eqn: System Responses, Affine Constraints, Controllability}
    \begin{bmatrix}
        I - \downshift \A & - \downshift \B
    \end{bmatrix} \systemResponse &= 
    \begin{bmatrix}
        I & O
    \end{bmatrix}, \\ \label{Eqn: System Responses, Affine Constraints, Observability}
    \systemResponse \begin{bmatrix}
        I - \downshift \jacobianStateStacked \\ -\outputMatrixStacked
    \end{bmatrix} &= 
    \begin{bmatrix}
        I \\ O
    \end{bmatrix}, \\ \label{Eqn: System Responses, Affine Constraints, Upper Triangular}
    \systemResponse_{\state \disturbance}, \systemResponse_{\state \outputNoise}, \systemResponse_{\ratePGD \disturbance}, \systemResponse_{\ratePGD \outputNoise} &\text{ are lower block-triangular}.
\end{align}
\end{subequations}
}
\hspace{-1mm}Conversely, if $\systemResponse_{\state \disturbance}$, $ \systemResponse_{\state \outputNoise}$, $ \systemResponse_{\ratePGD \disturbance}$, and $\systemResponse_{\ratePGD \outputNoise}$ 
satisfy \eqref{Eqn: System Responses, Affine Constraints}, then $\feedbackGainStacked$, as computed by \eqref{Eqn: Feedback Error Gains from System Response}, is lower block triangular.
\end{proposition}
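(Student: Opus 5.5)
The plan is to argue directly from the system-response definitions \eqref{Eqn: System Responses from Output Feedback Gains} and the structure of $\downshift$, $\jacobianStateStacked$, $\jacobianControlStacked$, $\outputMatrixStacked$, $\feedbackGainStacked$. Write $M := I - \downshift \jacobianStateStacked - \downshift \jacobianControlStacked \feedbackGainStacked \outputMatrixStacked$. The forward direction then has three parts.

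\textbf{Invertibility of $M$.} Since $\jacobianStateStacked$, $\jacobianControlStacked$, $\outputMatrixStacked$ are block diagonal and $\feedbackGainStacked$ is lower block-triangular, $\jacobianStateStacked + \jacobianControlStacked \feedbackGainStacked \outputMatrixStacked$ is lower block-triangular. Premultiplying by the block downshift $\downshift$ makes it strictly lower block-triangular, hence nilpotent. So $M$ is unipotent lower block-triangular and invertible, with $M^{-1} = \sum_{i=0}^{\iterationHorizon} (\downshift \jacobianStateStacked + \downshift \jacobianControlStacked \feedbackGainStacked \outputMatrixStacked)^i$, which is again lower block-triangular.

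\textbf{Causality \eqref{Eqn: System Responses, Affine Constraints, Upper Triangular}.} Each of $\systemResponse_{\state\disturbance} = M^{-1}$, $\systemResponse_{\state\outputNoise} = M^{-1}\downshift\jacobianControlStacked\feedbackGainStacked$, $\systemResponse_{\ratePGD\disturbance} = \feedbackGainStacked\outputMatrixStacked M^{-1}$, and $\systemResponse_{\ratePGD\outputNoise}$ is a product and sum of lower block-triangular factors. It is therefore lower block-triangular, with the block partitions compatible by the dimension bookkeeping.

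\textbf{The affine identities.} For \eqref{Eqn: System Responses, Affine Constraints, Controllability}, compute the two block columns separately:
\[
(I-\downshift\jacobianStateStacked)\systemResponse_{\state\disturbance} - \downshift\jacobianControlStacked \systemResponse_{\ratePGD\disturbance} = M M^{-1} = I,
\]
\[
(I-\downshift\jacobianStateStacked)\systemResponse_{\state\outputNoise} - \downshift\jacobianControlStacked \systemResponse_{\ratePGD\outputNoise} = M M^{-1}\downshift\jacobianControlStacked\feedbackGainStacked - \downshift\jacobianControlStacked\feedbackGainStacked = O.
\]
The second line uses that $\systemResponse_{\ratePGD\outputNoise} = \feedbackGainStacked\outputMatrixStacked\systemResponse_{\state\outputNoise} + \feedbackGainStacked$. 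For \eqref{Eqn: System Responses, Affine Constraints, Observability}, use the push-through identity
\[
(I-\downshift\jacobianStateStacked)^{-1}\bigl(I - \downshift\jacobianControlStacked\feedbackGainStacked\outputMatrixStacked(I-\downshift\jacobianStateStacked)^{-1}\bigr)^{-1} = M^{-1}.
\]
Here $I-\downshift\jacobianStateStacked$ is itself unipotent and invertible. With it, verify $\systemResponse_{\state\disturbance}(I-\downshift\jacobianStateStacked) - \systemResponse_{\state\outputNoise}\outputMatrixStacked = M^{-1}M = I$ and $\systemResponse_{\ratePGD\disturbance}(I-\downshift\jacobianStateStacked) - \systemResponse_{\ratePGD\outputNoise}\outputMatrixStacked = \feedbackGainStacked\outputMatrixStacked - \feedbackGainStacked\outputMatrixStacked = O$. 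Both are direct algebra once $M^{-1}$ is factored out on the left.

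\textbf{Converse.} Suppose \eqref{Eqn: System Responses, Affine Constraints} holds. The $(1,1)$ block of \eqref{Eqn: System Responses, Affine Constraints, Controllability} reads $(I-\downshift\jacobianStateStacked)\systemResponse_{\state\disturbance} - \downshift\jacobianControlStacked\systemResponse_{\ratePGD\disturbance} = I$. Since $\downshift\jacobianControlStacked\systemResponse_{\ratePGD\disturbance}$ is strictly lower block-triangular, the diagonal blocks of $\systemResponse_{\state\disturbance}$ are identities. Hence $\systemResponse_{\state\disturbance}$ is invertible with a lower block-triangular inverse. Then $\feedbackGainStacked = \systemResponse_{\ratePGD\outputNoise} - \systemResponse_{\ratePGD\disturbance}\systemResponse_{\state\disturbance}^{-1}\systemResponse_{\state\outputNoise}$ is a combination of lower block-triangular matrices, so it is lower block-triangular.

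The main obstacle is dimensional bookkeeping rather than ideas. The blocks of $\outputMatrixStacked$ and $\feedbackGainStacked$ are rectangular, so "lower block-triangular" must be read with respect to the time-indexing partition (rows by iteration $k$, columns by iteration $j$). Nilpotency of $\downshift(\cdot)$ also requires checking that $\downshift$ shifts by exactly one time block. I would fix this partition explicitly at the outset and then verify each product's structure blockwise.
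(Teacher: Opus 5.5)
Your proof is correct. The paper does not prove this statement itself; it imports it from \cite{ZhouTzoumas2023SafeControlofPartiallyObservedLTVSystems} (and \cite{anderson2019levelsynthesis}). You instead give a self-contained argument with three parts. First, the two affine identities follow by factoring out $M^{-1}$. Second, causality follows from nilpotency of the strictly lower block-triangular operator $\downshift(\jacobianStateStacked + \jacobianControlStacked\feedbackGainStacked\outputMatrixStacked)$. Third, for the converse, \eqref{Eqn: System Responses, Affine Constraints, Controllability} together with triangularity forces $\systemResponse_{\state\disturbance}$ to have identity diagonal blocks. This is the standard argument from that literature.

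Your route makes explicit something the paper leaves implicit: that the inverses in \eqref{Eqn: System Responses from Output Feedback Gains} and \eqref{Eqn: Feedback Error Gains from System Response} exist at all in this finite-horizon setting. The citation buys brevity. The cited SLS results also carry a stronger converse, namely that the gain from \eqref{Eqn: Feedback Error Gains from System Response} reproduces $\systemResponse$ in closed loop. The statement here does not claim that, and you correctly do not need it.

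Two minor remarks:
\begin{itemize}
\item The push-through identity is superfluous, since $\systemResponse_{\state\disturbance}(I-\downshift\jacobianStateStacked)-\systemResponse_{\state\outputNoise}\outputMatrixStacked = M^{-1}(I-\downshift\jacobianStateStacked-\downshift\jacobianControlStacked\feedbackGainStacked\outputMatrixStacked) = I$ immediately.
\item In the converse, state explicitly that $\downshift\jacobianStateStacked\systemResponse_{\state\disturbance}$ is also strictly lower block-triangular, by the assumed triangularity of $\systemResponse_{\state\disturbance}$. You mention only $\downshift\jacobianControlStacked\systemResponse_{\ratePGD\disturbance}$, and both are needed before reading off the diagonal blocks.
\end{itemize}
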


Below, given $\systemResponse$ satisfying \eqref{Eqn: System Responses, Affine Constraints}, for each $k \in [0, \iterationHorizon]$ and $j \in [0, k]$, we respectively define $(\systemResponse_{\state \disturbance})_{k,j}$ and $(\systemResponse_{\ratePGD \disturbance})_{k,j}$ to be the $(k, j)$-th block
of $\systemResponse_{\state \disturbance}$ and $\systemResponse_{\ratePGD \disturbance}$ in the sense of \eqref{Eqn: Stacking Feedback Gains}.



\subsection{Linearization Error Bounds}
\label{subsec: Linearization Error Bounds}



To compute forward reachable sets for the dynamics~\eqref{Eqn: True Dynamics} while enforcing the steplength constraints \eqref{Eqn: Steplength bound constraints}, we first bound the disturbance \eqref{Eqn: Disturbance} to characterize the error accrued from linearizing $\dynamicsFullState$. 
In Prop. \ref{Prop: Lipschitz Continuity of Gradient of PGD Dynamics},
we
begin by bounding the curvature of $\dynamicsFullState$, 
in Prop. \ref{Prop: Lipschitz Continuity of Gradient of PGD Dynamics}, 
using the Lipschitz constant of its gradient over all possible iterate rollouts.
To this end, define:
{
\setlength{\abovedisplayskip}{3.5pt}
\setlength{\belowdisplayskip}{3.5pt}
\begin{align} \label{Eqn: Convex Hull of all Trajectories}
    \varConvexHull &:= \conv\big\{ \varForwardReachableSet{k}(\var_0, \ratePGD_{0:k-1}): \\ \nonumber
    &\hspace{2cm} k \geq 0, \var_0 \in \varInitialSet, \ratePGD_{0:k-1} \in [\ratePGDMin, \ratePGDMax]^k \big\}
\end{align}
}
\hspace{-1.7mm}as the convex hull of all possible rollouts of the PGD dynamics \eqref{Eqn: Projected Gradient Descent (PGD)} from any $\var_0 \in \varInitialSet$.
We prove that, for a suitable $\constraintSet$, the gradient of \eqref{Eqn: True Dynamics} is Lipschitz continuous over $\varConvexHull \times \paramSet$.

\begin{proposition}
(\textbf{$\dynamicsFullState$ Admits Lipschitz Gradients})
\label{Prop: Lipschitz Continuity of Gradient of PGD Dynamics}
Suppose Assumptions \ref{Assump: Objective Properties}, \ref{Assump: Parameter Set is Convex and Compact}, and \ref{Assump: PGD Dynamics, Initial Iterate} hold, and $\constraintSet$ is $\R^\varDim$ or an affine subspace of $\R^\varDim$.
For each $i \in [\varDim + \paramDim]$, let $\dynamicsFullState_i: \R^{\varDim + \paramDim + 1} \ra \R$ denote the $i$-th component of the dynamics $\dynamicsFullState$ in \eqref{Eqn: True Dynamics}. 
Then, $\forall \ i \in [\varDim + \paramDim]$, there exists some $\hessianBound_i > 0$ such that, $\forall \ (\state, \ratePGD), (\state', \ratePGD') \in \varConvexHull \times \paramSet \times [\ratePGDMin, \ratePGDMax]$:
\begin{align}
    \label{Eqn: Lipschitz Continuity of Gradient of PGD Dynamics}
    &\Vert \nabla \dynamicsFullState_i(\state, \ratePGD) - \nabla \dynamicsFullState_i(\state', \ratePGD') \Vert_1 \hspace{-0.4mm} \leq \hspace{-0.4mm} \hessianBound_i \Vert (\state, \ratePGD) - (\state', \ratePGD') \Vert_\infty.
\end{align}
\end{proposition}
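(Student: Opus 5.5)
Since $\constraintSet$ is $\R^\varDim$ or an affine subspace, the projection is affine: $\proj_\constraintSet(\var) = P\var + c$ for an orthogonal projector $P$ onto the direction subspace and a fixed vector $c$. Hence
\begin{align*}
\dynamicsFullState(\var, \param, \ratePGD) = \big(P\var - \ratePGD P \nabla_\var \obj(\var, \param) + c, \ \param\big).
\end{align*}
The last $\paramDim$ components are linear in $\state$. Their gradients are constant, so \eqref{Eqn: Lipschitz Continuity of Gradient of PGD Dynamics} holds for them with any $\hessianBound_i > 0$.

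For $i \in [\varDim]$, we have $\dynamicsFullState_i(\var, \param, \ratePGD) = e_i^\top P\var + e_i^\top c - \ratePGD\, g_i(\var,\param)$, where $g_i := e_i^\top P \nabla_\var \obj$. Its gradient with respect to $(\var, \param, \ratePGD)$ is
\begin{align*}
\big( P^\top e_i - \ratePGD \nabla_\var g_i,\ -\ratePGD \nabla_\param g_i,\ -g_i \big).
\end{align*}
By Assumption \ref{Assump: Objective Properties}, all second partials of $\obj$ are Lipschitz. Hence $\nabla g_i$ (a fixed linear combination of second partials) is Lipschitz with some constant $\lipschitz$, and $g_i$ is $C^1$. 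We will bound the difference of gradients at $(\state,\ratePGD)$ and $(\state',\ratePGD')$ with the triangle inequality. The typical term is
\begin{align*}
\Vert \ratePGD \nabla g_i(\state) - \ratePGD' \nabla g_i(\state') \Vert \le \ratePGD \Vert \nabla g_i(\state) - \nabla g_i(\state')\Vert + |\ratePGD - \ratePGD'|\, \Vert \nabla g_i(\state')\Vert.
\end{align*}
The first part is at most $\ratePGDMax \lipschitz \Vert \state - \state'\Vert$. For the second part we need $\sup \Vert \nabla g_i\Vert$ over the domain. The last component satisfies $|g_i(\state) - g_i(\state')| \le \sup\Vert\nabla g_i\Vert\,\Vert\state-\state'\Vert$ by the mean value theorem along the segment. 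The segment stays in the domain because $\varConvexHull \times \paramSet$ is convex, using Assumption \ref{Assump: Parameter Set is Convex and Compact}. Finally, the norms are converted to $\ell_1$ on the left and $\ell_\infty$ on the right using finite-dimensional norm equivalence. This yields $\hessianBound_i$ depending on $\ratePGDMax$, $\lipschitz$, $\sup \Vert \nabla g_i\Vert$, and the dimension.

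\textbf{Main obstacle: showing the domain $\varConvexHull \times \paramSet$ is bounded.} Once it is bounded, its closure is compact and the continuous $\nabla g_i$ attains a finite supremum there. To show boundedness, I would use Prop. \ref{Prop: Convergence of PGD}: for any $\var_0 \in \varInitialSet$, $\param\in\paramSet$, admissible steps and $k\ge 0$,
\begin{align*}
\Vert \varMap_k - \varOpt(\param)\Vert_2 \le \Vert \var_0 - \varOpt(\param)\Vert_2 .
\end{align*}
Hence every iterate lies within distance $\diam(\varInitialSet) + 2\sup_{\param} \Vert\varOpt(\param)\Vert + \sup_{\var_0\in\varInitialSet}\Vert \var_0\Vert$ of the origin. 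This is finite by Assumption \ref{Assump: PGD Dynamics, Initial Iterate} and Prop. \ref{Prop: Minimizer Set is Bounded}. The union of reachable sets therefore lies in a ball, and so does its convex hull $\varConvexHull$. Compactness of $\paramSet$ and $[\ratePGDMin,\ratePGDMax]$ completes the argument.
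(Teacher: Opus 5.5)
Your proposal is correct and follows essentially the same route as the paper. You first show $\varConvexHull$ is bounded from the PGD contraction of Prop.~\ref{Prop: Convergence of PGD} together with the boundedness of $\varInitialSet$ and $\varOptSet$. You then use that the affine projection makes $\nabla \dynamicsFullState_i$ depend bilinearly on $\ratePGD$ and the derivatives of $\obj$, whose second-order partials are Lipschitz and hence bounded on the bounded convex domain. Your boundedness step, which uses $\convergenceRatePGD^k \le 1$ directly instead of the paper's limiting Hausdorff-distance argument, and your explicit triangle-inequality and mean-value bookkeeping are a cleaner, more detailed version of the paper's argument rather than a different approach.
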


\begin{proof}
We first verify that $\varConvexHull$ is bounded.
From \eqref{Eqn: Forward Reachable Sets approach Minimizer Sets} in Thm. \ref{Thm: Over-approximating Minimizer Set using Forward Reachable Sets},
$\forall \ \var_0 \in \varInitialSet$, $\param \in \paramSet$, and $\ratePGD_{0:k-1} \in [\ratePGDMin, \ratePGDMax]^k$, we have 
$\Vert \varMap_k(\var_0, \param, \ratePGD_{0:k-1}) - \varOpt(\param) \Vert_2 \leq \convergenceRadius_k(\var_0)$ as $k \ra \infty$. Maximizing
over $\param \in \paramSet$, we find $\dist(\varForwardReachableSet{k}(\var_0, \ratePGD_{0:k-1}), \varOptSet) \leq \convergenceRadius_k(\var_0)$ as $k \ra \infty$.
Since $\varForwardReachableSet{0}(\var_0, \param) = \varInitialSet$ and $\varOpt$ are bounded (by Assumption \ref{Assump: PGD Dynamics, Initial Iterate} and Prop. \ref{Prop: Minimizer Set is Bounded}, resp.), and $\convergenceRadius_k(\var_0) \geq 0$ decreases 
to 0 as $k \ra \infty$, we conclude that the sets $\varForwardReachableSet{k}(\var_0, \ratePGD_{0:k-1})$ are uniformly bounded across all $k \geq 0$, $\var_0 \in \varInitialSet$, and $\ratePGD_{0:k-1} \in [\ratePGDMin, \ratePGDMax]^k$, and thus $\varConvexHull$ is bounded.

Next, since $\constraintSet$ is $\R^\varDim$ or an affine subspace of $\R^\varDim$, the projection map $\proj_\constraintSet$ is affine. 
Thus, from \eqref{Eqn: Projected Gradient Descent (PGD)} and \eqref{Eqn: True Dynamics}, 
each $\nabla \dynamicsFullState_i$
is bilinear in $\ratePGD$ and the first-order and second-order partial derivatives of $\obj$.
For $\nabla \dynamicsFullState_i$ 
to be Lipschitz continuous over $\varConvexHull \times \paramSet \times [\ratePGDMin, \ratePGDMax]$, it thus suffices to verify that the second-order partial derivatives of $\obj$ are Lipschitz continuous and bounded over $\varConvexHull \times \paramSet$, which follows from Assumption \ref{Assump: Objective Properties} and \ref{Assump: Parameter Set is Convex and Compact} and the boundedness of $\varConvexHull$ established above.
\end{proof}

Equipped with curvature bounds of $\dynamicsFullState$ provided by Prop. \ref{Prop: Lipschitz Continuity of Gradient of PGD Dynamics}, we now bound the disturbance $\disturbance_k = \linearizationError(\state_k, \ratePGD_k, \stateNominal_k, \ratePGDNominal_k)$ in \eqref{Eqn: Disturbance}, in a manner analogous to \cite[Prop. III.1]{Leeman2025RobustNonlinearOptimalControlviaSLS}.

\begin{proposition}[\textbf{Linearization Error Bound}]
\label{Prop: Quadratic Bound on Linearization Error}
\looseness-1Suppose Assumptions \ref{Assump: Objective Properties}, \ref{Assump: Parameter Set is Convex and Compact}, and \ref{Assump: PGD Dynamics, Initial Iterate} hold, and $\constraintSet$ is $\R^\varDim$ or an affine subspace of $\R^\varDim$.
For each $k \in [0, \iterationHorizon]$ and $i \in [\varDim + \paramDim]$, we define $r_{k,i}: \R^{2(\varDim + \paramDim)} \ra \R$ to be the $i$-th coordinate of the linearization error map $r_k$,
for each $i \in [\varDim + \paramDim]$.
Then for any $\var_0 \in  \varInitialSet$, $\ratePGD_{0:k-1}, \ratePGDNominal_{0:k-1} \in [\ratePGDMin, \ratePGDMax]^k$, any $\state_k = (\var_k, \param_k) \in \varForwardReachableSet{k}(\var_0, \ratePGD_{0:k-1}) \times \paramSet$ and $\stateNominal_k = (\varNominal_k, \paramNominal_k) \in \varForwardReachableSet{k}(\var_0, \ratePGDNominal_{0:k-1}) \times \paramSet$, and any $i \in [\varDim + \paramDim]$, we have:
\begin{align}
\label{Eqn: Quadratic Bound on Linearization Error}
    |\linearizationError_{k,i}(\state_k, \ratePGD_k, \stateNominal_k, \ratePGDNominal_k)| &\leq \hessianBound_i \cdot \Vert (\stateError_k, \ratePGDError_k) \Vert_\infty^2.
\end{align}
\end{proposition}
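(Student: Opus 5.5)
We will prove \eqref{Eqn: Quadratic Bound on Linearization Error} coordinatewise with the integral form of Taylor's theorem, along the lines of \cite[Prop. III.1]{Leeman2025RobustNonlinearOptimalControlviaSLS}. Fix $k$, $i$, and points $\state_k$, $\stateNominal_k$ as in the statement. Write $z := (\state_k, \ratePGD_k)$ and $\hat z := (\stateNominal_k, \ratePGDNominal_k)$, so that $\Delta z := z - \hat z = (\stateError_k, \ratePGDError_k)$.

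By \eqref{Eqn: Linearization Error}, together with \eqref{Eqn: Jacobian Matrices}, the $i$-th coordinate of the linearization error is
\[
\linearizationError_{k,i} = \dynamicsFullState_i(z) - \dynamicsFullState_i(\hat z) - \nabla \dynamicsFullState_i(\hat z)^\top \Delta z .
\]
In the setting of the statement ($\constraintSet$ equal to $\R^\varDim$ or an affine subspace), $\proj_\constraintSet$ is affine. Under Assumption \ref{Assump: Objective Properties}, $\dynamicsFullState_i$ is therefore continuously differentiable, and the fundamental theorem of calculus along the segment $s \mapsto \hat z + s\Delta z$, $s \in [0,1]$, gives
\[
\linearizationError_{k,i} = \int_0^1 \big(\nabla \dynamicsFullState_i(\hat z + s\Delta z) - \nabla \dynamicsFullState_i(\hat z)\big)^\top \Delta z \, ds .
\]
We then apply H\"older's inequality in the $(\ell_1,\ell_\infty)$ pairing, followed by Prop. \ref{Prop: Lipschitz Continuity of Gradient of PGD Dynamics}. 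This bounds the integrand by
\[
\Vert \nabla \dynamicsFullState_i(\hat z + s\Delta z) - \nabla \dynamicsFullState_i(\hat z)\Vert_1 \, \Vert \Delta z\Vert_\infty \leq \hessianBound_i \, s \, \Vert \Delta z\Vert_\infty^2 .
\]
Integrating over $s \in [0,1]$ gives $\tfrac{1}{2}\hessianBound_i \Vert \Delta z\Vert_\infty^2$, which is stronger than the claimed bound.

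The main obstacle is justifying that the whole segment lies in the domain where Prop. \ref{Prop: Lipschitz Continuity of Gradient of PGD Dynamics} applies, namely $\varConvexHull \times \paramSet \times [\ratePGDMin, \ratePGDMax]$. We check each component in turn.
\begin{itemize}
\item \emph{Variable component.} We have $\var_k \in \varForwardReachableSet{k}(\var_0, \ratePGD_{0:k-1})$ and $\varNominal_k \in \varForwardReachableSet{k}(\var_0, \ratePGDNominal_{0:k-1})$, with steplengths in $[\ratePGDMin, \ratePGDMax]^k$. Both points therefore lie in the union defining \eqref{Eqn: Convex Hull of all Trajectories}. Since $\varConvexHull$ is convex by construction, the segment between them stays in $\varConvexHull$.
\item \emph{Parameter component.} $\paramSet$ is convex by Assumption \ref{Assump: Parameter Set is Convex and Compact}, so the segment between $\param_k$ and $\paramNominal_k$ stays in $\paramSet$.
\item \emph{Steplength component.} $[\ratePGDMin, \ratePGDMax]$ is an interval, so it contains the segment between $\ratePGD_k$ and $\ratePGDNominal_k$.
\end{itemize}
This requires the steplengths $\ratePGD_k$ and $\ratePGDNominal_k$ at iteration $k$ to lie in $[\ratePGDMin, \ratePGDMax]$. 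This is implicit in the statement and is enforced by \eqref{Eqn: Steplength bound constraints}, so we will state it explicitly.

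With these points in place, the Lipschitz bound of Prop. \ref{Prop: Lipschitz Continuity of Gradient of PGD Dynamics} holds at every point of the segment, and the estimate above completes the argument for each $i \in [\varDim + \paramDim]$.
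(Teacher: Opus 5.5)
Your proof is correct and follows the paper's argument: the paper writes $\dynamicsFullState_i(\state_k,\ratePGD_k)-\dynamicsFullState_i(\stateNominal_k,\ratePGDNominal_k)$ via the mean value theorem at an intermediate point of the segment, checks by convexity that this point lies in $\varConvexHull\times\paramSet\times[\ratePGDMin,\ratePGDMax]$, and then applies the $\ell_1$/$\ell_\infty$ H\"older bound together with Prop.~\ref{Prop: Lipschitz Continuity of Gradient of PGD Dynamics}; your integral-remainder version is the same argument and gives the sharper constant $\tfrac12\hessianBound_i$. You are right to require $\ratePGD_k,\ratePGDNominal_k\in[\ratePGDMin,\ratePGDMax]$ explicitly, since the paper only assumes this implicitly (``by construction'').
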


\begin{proof}
\looseness-1 By the Mean Value Theorem, there exists a $\lambda_i \in [0, 1]$ with:
{
\setlength{\abovedisplayskip}{3.5pt}
\setlength{\belowdisplayskip}{3.5pt}
\begin{align} \nonumber
    &\dynamicsFullState_i(\state_k, \ratePGD_k) - \dynamicsFullState_i(\stateNominal_k, \ratePGDNominal_k) \\ \nonumber
    = \ &\nabla \dynamicsFullState_i\big( \lambda_i(\state_k, \ratePGD_k) + (1 - \lambda_i)(\stateNominal_k, \ratePGDNominal_k) \big)^\top (\stateError_k, \ratePGDError_k).
\end{align}
}
\hspace{-1mm}By construction, $(\state_k, \ratePGD_k), (\stateNominal_k, \ratePGDNominal_k) \in \varConvexHull \times \paramSet \times [\ratePGDMin, \ratePGDMax]$, and thus 
$\lambda_i(\state_k, \ratePGD_k) + (1 - \lambda_i)(\stateNominal_k, \ratePGDNominal_k) \in \varConvexHull \times \paramSet \times [\ratePGDMin, \ratePGDMax]$ (Note that $\varConvexHull$ and $[\ratePGDMin, \ratePGDMax]$ are convex by construction, and $\paramSet$ is convex under Assumption \ref{Assump: Parameter Set is Convex and Compact}).
Thus, we have:
{
\setlength{\abovedisplayskip}{3.5pt}
\setlength{\belowdisplayskip}{3.5pt}
\begin{align} \nonumber
    &|\linearizationError_{k,i}(\state_k, \ratePGD_k, \stateNominal_k, \ratePGDNominal_k)| \\ \nonumber
    = \ &|\dynamicsFullState_i(\state_k, \ratePGD_k) - \dynamicsFullState_i(\stateNominal_k, \ratePGDNominal_k) - \nabla \dynamicsFullState_i(\stateNominal_k, \ratePGDNominal_k)^\top (\stateError_k, \ratePGDError_k) | \\ \nonumber
    \leq \ &\Vert \nabla \dynamicsFullState_i\big( \lambda_i(\state_k, \ratePGD_k) + (1 - \lambda_i)(\stateNominal_k, \ratePGDNominal_k) \big) - \nabla \dynamicsFullState_i(\stateNominal_k, \ratePGDNominal_k) \Vert_1 \\ \nonumber
    &\hspace{1cm} \cdot \Vert (\stateError_k, \ratePGDError_k) \Vert_\infty \\ 
    \label{Eqn: Apply Lipschitz bounds of Dynamics' Gradient}
    \leq \ &\mu_i \cdot \lambda_i \Vert (\stateError_k, \ratePGDError_k) \Vert_\infty \cdot \Vert (\stateError_k, \ratePGDError_k) \Vert_\infty,
    \\ \nonumber
    \leq \ &\hessianBound_i \cdot \Vert (\stateError_k, \ratePGDError_k) \Vert_\infty^2.
\end{align}
}
where \eqref{Eqn: Apply Lipschitz bounds of Dynamics' Gradient} follows from Prop. \ref{Prop: Lipschitz Continuity of Gradient of PGD Dynamics}.
\end{proof}

We define $\hessianBoundStacked := \diag\{\hessianBound_1, \cdots, \hessianBound_{\varDim + \paramDim} \} \in \R^{(\varDim + \paramDim) \times (\varDim + \paramDim)}$, which allows us to stack \eqref{Eqn: Quadratic Bound on Linearization Error} across all $i \in [\varDim + \paramDim]$ as:
{
\setlength{\abovedisplayskip}{3.5pt}
\setlength{\belowdisplayskip}{3.5pt}
\begin{align}
\label{Eqn: Disturbance Bound, Stacked}
    \disturbance_k = \linearizationError_k(\state_k, \ratePGD_k, \stateNominal_k, \ratePGDNominal_k) \in \Vert (\stateError_k, \ratePGDError_k) \Vert_\infty^2 \hessianBoundStacked \ballUnit_\infty^{\varDim + \paramDim}
\end{align}
}

Below, in Thm. \ref{Thm: Robust Steplength Constraint Satisfaction}, we leverage Prop. \ref{Prop: Quadratic Bound on Linearization Error} to establish robust guarantees for satisfying the steplength constraint \eqref{Eqn: Steplength bound constraints}.

\begin{theorem}
[\textbf{Robust Steplength Constraint Satisfaction}]
\label{Thm: Robust Steplength Constraint Satisfaction}
Suppose Assumptions \ref{Assump: Objective Properties}, \ref{Assump: Constraint Set is Convex and Closed}, 
\ref{Assump: Parameter Set is Convex and Compact},
and \ref{Assump: PGD Dynamics, Initial Iterate} hold.
Given a nominal trajectory $(\stateNominalStacked, \ratePGDNominalStacked)$ of the dynamics $\dynamicsFullState$ in \eqref{Eqn: True Dynamics} and a system response $\systemResponse$ satisfying \eqref{Eqn: System Responses, Affine Constraints}, 
if there exist auxiliary error upper bounds $\tubeSizeStacked := (\tubeSize_0, \cdots, \tubeSize_\iterationHorizon)$ satisfying, $\forall k \in [0, \iterationHorizon]$:
{
\setlength{\abovedisplayskip}{3.5pt}
\setlength{\belowdisplayskip}{3.5pt}
\begin{align} 
\label{Eqn: Inequality for Robust Constraint Satisfaction}
    &\ratePGDBoundSlope_p^\top (\stateNominal_k, \ratePGDNominal_k) + \ratePGDBoundOffset_p + \left\Vert \ratePGDBoundSlope_p^\top
    \begin{bmatrix}
        (\systemResponse_{\state \disturbance})_{k, 0} \\
        (\systemResponse_{\ratePGD \disturbance})_{k, 0}
    \end{bmatrix}
    \right\Vert_1 \diam(\paramSet) \\ \nonumber
    &\hspace{5mm} + \sum_{j=1}^k \tubeSize_{j-1}^2 \left\Vert \ratePGDBoundSlope_p^\top
    \begin{bmatrix}
        (\systemResponse_{\state \disturbance})_{k, 0} \\
        (\systemResponse_{\ratePGD \disturbance})_{k, 0}
    \end{bmatrix}
    \hessianBoundStacked
    \right\Vert_1 \leq 0, \quad \forall \ p \in [2], \\ 
\label{Eqn: Inequality for Propagating Recursive Tube Bounds}
    &\left\Vert 
    \ratePGDBoundSlope_p^\top
    \begin{bmatrix}
        (\systemResponse_{\state \disturbance})_{k, 0} \\
        (\systemResponse_{\ratePGD \disturbance})_{k, 0}
    \end{bmatrix}
    \right\Vert_\infty \diam(\paramSet) \\ \nonumber
    &\hspace{5mm} + \sum_{j=1}^k \tubeSize_{j-1}^2 \left\Vert 
    \begin{bmatrix}
        (\systemResponse_{\state \disturbance})_{k, 0} \\
        (\systemResponse_{\ratePGD \disturbance})_{k, 0}
    \end{bmatrix}
    \hessianBoundStacked
    \right\Vert_\infty \leq \tubeSize_k,
\end{align}
}
\hspace{-1.2mm}then $\Vert (\stateError_k, \ratePGDError_k) \Vert_\infty \leq \tubeSize_k$ and $\ratePGD_k \in [\ratePGDMin, \ratePGDMax]$ for each $k \in [0, \iterationHorizon]$,
regardless of the ground truth value of $\param \in \paramSet$.
\end{theorem}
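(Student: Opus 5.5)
We argue by strong induction on $k \in [0, \iterationHorizon]$, proving the two claims $\Vert (\stateError_k, \ratePGDError_k) \Vert_\infty \leq \tubeSize_k$ and $\ratePGD_k \in [\ratePGDMin, \ratePGDMax]$ together. The starting point is the system-response representation \eqref{Eqn: State and Control Errors, Stacked, written using System Responses}: since the output noise is zero, $(\stateErrorStacked, \ratePGDErrorStacked) = (\systemResponse_{\state\disturbance}\disturbanceStacked, \systemResponse_{\ratePGD\disturbance}\disturbanceStacked)$. Lower block-triangularity \eqref{Eqn: System Responses, Affine Constraints, Upper Triangular} then lets us write the $k$-th block as
\begin{align*}
(\stateError_k, \ratePGDError_k) = \begin{bmatrix} (\systemResponse_{\state \disturbance})_{k, 0} \\ (\systemResponse_{\ratePGD \disturbance})_{k, 0} \end{bmatrix} \stateError_0 + \sum_{j=1}^k \begin{bmatrix} (\systemResponse_{\state \disturbance})_{k, j} \\ (\systemResponse_{\ratePGD \disturbance})_{k, j} \end{bmatrix} \disturbance_{j-1}.
\end{align*}
The statement writes the block index as $0$ in every term of the sums. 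I read this as the $(k,j)$ block, with the same argument going through under that reading; if it is meant literally as a time-invariant bound, the same steps apply unchanged. The key structural fact is that $(\stateError_k, \ratePGDError_k)$ depends only on $\stateError_0$ and on $\disturbance_0, \dots, \disturbance_{k-1}$, and these in turn depend only on errors at indices strictly less than $k$.

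Next we bound each input term. By Remark~\ref{Remark: Choice of Initial Nominal Parameter, paramNominal 0}, $\varNominal_0 = \var_0$, so $\stateError_0 = (\zero, \param - \paramNominal_0)$ with both $\param, \paramNominal_0 \in \paramSet$. This gives $\Vert \stateError_0 \Vert_\infty \leq \Vert \stateError_0\Vert_2 \leq \diam(\paramSet)$. For the disturbances we use the induction hypothesis $\Vert(\stateError_{j-1}, \ratePGDError_{j-1})\Vert_\infty \leq \tubeSize_{j-1}$ together with $\ratePGD_{j-1} \in [\ratePGDMin,\ratePGDMax]$. The latter puts $\var_{j-1}$ in the forward reachable set with admissible steplengths, so the hypotheses of Prop.~\ref{Prop: Quadratic Bound on Linearization Error} are met. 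Then \eqref{Eqn: Disturbance Bound, Stacked} yields $\disturbance_{j-1} \in \tubeSize_{j-1}^2 \hessianBoundStacked \ballUnit_\infty^{\varDim+\paramDim}$.

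With these bounds in hand, both conclusions follow from Hölder's inequality. Taking the $\infty$-norm of the expansion and bounding each block term by its induced norm applied to an $\infty$-ball gives exactly the left side of \eqref{Eqn: Inequality for Propagating Recursive Tube Bounds}, hence $\Vert(\stateError_k,\ratePGDError_k)\Vert_\infty \leq \tubeSize_k$. For the constraint, decompose $\ratePGDBoundSlope_p^\top(\state_k,\ratePGD_k) + \ratePGDBoundOffset_p$ into the nominal part plus $\ratePGDBoundSlope_p^\top(\stateError_k,\ratePGDError_k)$. Each term of the latter is bounded by $\Vert \ratePGDBoundSlope_p^\top M\Vert_1\,\Vert w\Vert_\infty$, using $\hessianBoundStacked$ to rescale the disturbance ball. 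Inequality \eqref{Eqn: Inequality for Robust Constraint Satisfaction} then gives $\ratePGDBoundSlope_p^\top(\state_k,\ratePGD_k)+\ratePGDBoundOffset_p \leq 0$ for $p\in[2]$, i.e.\ $\ratePGD_k\in[\ratePGDMin,\ratePGDMax]$.

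The base case $k=0$ has no disturbance terms and uses only the $\diam(\paramSet)$ bound. The main obstacle is the circularity in applying Prop.~\ref{Prop: Quadratic Bound on Linearization Error}. That proposition is stated for affine $\constraintSet$ and requires admissible steplengths and iterates in the reachable sets, while its bound involves $\ratePGDError_{j-1}$, which is itself controlled only through the tube. The induction resolves this by establishing feasibility of $\ratePGD_{j-1}$ before $\disturbance_{j-1}$ is used at step $k$. I would also note explicitly that $\hessianBound_i$ is valid on the whole convex hull $\varConvexHull\times\paramSet\times[\ratePGDMin,\ratePGDMax]$, so that the nominal and true points both lie in the region where the curvature bound holds.
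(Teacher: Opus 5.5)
Your proposal is correct and follows the paper's proof essentially step for step: induction on $k$, the lower block-triangular system-response expansion of $(\stateError_k,\ratePGDError_k)$ in terms of $\stateError_0$ and $\disturbance_0,\dots,\disturbance_{k-1}$, the bounds $\Vert\stateError_0\Vert_\infty\le\diam(\paramSet)$ and $\disturbance_{j-1}\in\tubeSize_{j-1}^2\hessianBoundStacked\ballUnit_\infty^{\varDim+\paramDim}$ via Prop.~\ref{Prop: Quadratic Bound on Linearization Error}, and H\"older's inequality. There are two small corrections. First, Prop.~\ref{Prop: Quadratic Bound on Linearization Error} also needs the \emph{nominal} steplengths to satisfy $\ratePGDNominal_j\in[\ratePGDMin,\ratePGDMax]$; the paper carries this in its induction hypothesis and gets it from \eqref{Eqn: Inequality for Robust Constraint Satisfaction} by dropping the nonnegative norm terms. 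Second, your aside that the literal $(k,0)$-indexed sums would go through ``unchanged'' is false, because the expansion produces $(k,j)$ blocks. For the same reason, the stray $\ratePGDBoundSlope_p^\top$ in the first term of \eqref{Eqn: Inequality for Propagating Recursive Tube Bounds} must be dropped, so the bound follows only under the reading that the paper's proof actually uses.
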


\begin{proof}
We provide a proof by induction.
At $k = 0$, we prove $\Vert (\stateError_0, \ratePGD_0) \Vert_\infty \leq \tubeSize_0$ using \eqref{Eqn: State and Control Errors, Stacked, written using System Responses}, as follows:
{
\setlength{\abovedisplayskip}{3.5pt}
\setlength{\belowdisplayskip}{3.5pt}
\begin{align} \nonumber
    \left\Vert 
    \begin{bmatrix}
        \stateError_0 \\
        \ratePGDError_0
    \end{bmatrix}
    \right\Vert_\infty &= \left\Vert 
    \begin{bmatrix}
        (\systemResponse_{\state \disturbance})_{0, 0} \\
        (\systemResponse_{\ratePGD \disturbance})_{0, 0}
    \end{bmatrix}
    \stateError_0
    \right\Vert_\infty \\
    \label{Eqn: Apply Inequality for Propagating Recursive Tube Bounds, when k is 0}
    &\leq \left\Vert 
    \begin{bmatrix}
        (\systemResponse_{\state \disturbance})_{0, 0} \\
        (\systemResponse_{\ratePGD \disturbance})_{0, 0}
    \end{bmatrix}
    \right\Vert_\infty 
    \Vert \stateError_0 \Vert_\infty
    \leq \tubeSize_0,
\end{align}
}
\hspace{-1mm}where \eqref{Eqn: Apply Inequality for Propagating Recursive Tube Bounds, when k is 0} follows by taking $k = 0$ in \eqref{Eqn: Inequality for Propagating Recursive Tube Bounds} and noting that $\Vert \stateError_0 \Vert_\infty = \Vert (0, \paramError_0) \Vert_\infty \leq \diam(\paramSet)$. 
Similarly, we verify robust constraint satisfaction at $k = 0$ as follows:
{
\setlength{\abovedisplayskip}{3.5pt}
\setlength{\belowdisplayskip}{3.5pt}
\begin{align} \nonumber
    &\ratePGDBoundSlope_p^\top (\state_0, \ratePGD_0) + \ratePGDBoundOffset_p \\ \nonumber
    = \ &\ratePGDBoundSlope_p^\top (\stateNominal_0, \ratePGDNominal_0) + \ratePGDBoundOffset_p + \ratePGDBoundSlope_p^\top (\stateError_0, \ratePGDError_0) \\ \label{Eqn: Apply Inequality for Robust Constraint Satisfaction, k = 0}
    \leq \ &\ratePGDBoundSlope_p^\top (\stateNominal_0, \ratePGDNominal_0) + \ratePGDBoundOffset_p + 
    \left\Vert 
    \ratePGDBoundSlope_p^\top
    \begin{bmatrix}
        (\systemResponse_{\state \disturbance})_{0, 0} \\
        (\systemResponse_{\ratePGD \disturbance})_{0, 0}
    \end{bmatrix}
    \stateError_0
    \right\Vert_\infty\\ 
    \leq \ &\ratePGDBoundSlope_p^\top (\stateNominal_0, \ratePGDNominal_0) + \ratePGDBoundOffset_p + 
    \left\Vert 
    \ratePGDBoundSlope_p^\top
    \begin{bmatrix}
        (\systemResponse_{\state \disturbance})_{0, 0} \\
        (\systemResponse_{\ratePGD \disturbance})_{0, 0}
    \end{bmatrix}
    \right\Vert_1
    \Vert \stateError_0 \Vert_\infty \leq \ 0,\nonumber
\end{align}
}
\hspace{-1mm}where \eqref{Eqn: Apply Inequality for Robust Constraint Satisfaction, k = 0} follows because $\Vert \stateError_0 \Vert_\infty \leq \diam(\paramSet)$.
From \eqref{Eqn: Inequality for Robust Constraint Satisfaction}, we have $\ratePGDBoundSlope_p^\top (\stateNominal_0, \ratePGDNominal_0) + \ratePGDBoundOffset_p \leq 0$.
Thus, $\ratePGD_0, \ratePGDNominal_0 \in [\ratePGDMin, \ratePGDMax]$. 

Suppose for some $k \geq 1$, we have $\Vert (\stateError_j, \ratePGDError_j) \Vert_\infty \leq \tubeSize_j$ for each $j \in [0, k-1]$ and $\ratePGD_{0:k-1}, \ratePGDNominal_{0:k-1} \in [\ratePGDMin, \ratePGDMax]^k$.
For the inductive step, we must prove that $\Vert (\stateError_k, \ratePGDError_k) \Vert_\infty \leq \tubeSize_k$ and $\ratePGD_k, \ratePGDNominal_k \in [\ratePGDMin, \ratePGDMax]$. 
First, we verify that the disturbance bounds prescribed by Prop. \ref{Prop: Quadratic Bound on Linearization Error} are valid for $\disturbance_j = \linearizationError_j(\state_j, \ratePGD_j, \stateNominal_j, \ratePGDNominal_j)$ across all $j \in [0, k-1]$.
Indeed, for each $j \in [0, k-1]$, since $\param_j = \param_0 \in \paramSet$ and $\paramNominal_j = \paramNominal_0 \in \paramSet$:
{
\setlength{\abovedisplayskip}{3.5pt}
\setlength{\belowdisplayskip}{3.5pt}
\begin{subequations}
\begin{align}
    \state_j &= (\var_j, \param_j) \in \varForwardReachableSet{j}(\var_0, \ratePGD_{0:k-1}) \times \paramSet, \\
    \stateNominal_j &= (\varNominal_j, \paramNominal_j) \in \varForwardReachableSet{j}(\varNominal_0, \ratePGDNominal_{0:k-1}) \times \paramSet,
\end{align}
\end{subequations}
}
\hspace{-1mm}where $\ratePGD_{0:k-1}, \ratePGDNominal_{0:k-1} \in [\ratePGDMin, \ratePGDMax]^k$ by the induction hypothesis, and $\var_0, \varNominal_0 \in \varInitialSet$.
Thus, the conditions of Prop. \ref{Prop: Quadratic Bound on Linearization Error} are met at time $j$ for each $j \in [0, k-1]$, so 
\eqref{Eqn: Quadratic Bound on Linearization Error} and \eqref{Eqn: Disturbance Bound, Stacked} imply:
{
\setlength{\abovedisplayskip}{3.5pt}
\setlength{\belowdisplayskip}{3.5pt}
\begin{align}
\label{Eqn: Applying Disturbance Bound, Stacked}
    \disturbance_j 
    \in \Vert (\stateError_j, \ratePGDError_j) \Vert_\infty^2 \hessianBoundStacked \ballUnit_\infty^{\varDim + \paramDim} 
    \subseteq \tubeSize_j \hessianBoundStacked \ballUnit_\infty^{\varDim + \paramDim}
\end{align}
}
\hspace{-1mm}for each $j \in [0, k-1]$. 
Then, by \eqref{Eqn: Applying Disturbance Bound, Stacked}:
{
\setlength{\abovedisplayskip}{3.5pt}
\setlength{\belowdisplayskip}{3.5pt}
\begin{align} \nonumber
    \left\Vert 
    \begin{bmatrix}
        \stateError_k \\
        \ratePGDError_k
    \end{bmatrix}
    \right\Vert_\infty &= \left\Vert 
    \begin{bmatrix}
        (\systemResponse_{\state \disturbance})_{k, 0} \\
        (\systemResponse_{\ratePGD \disturbance})_{k, 0}
    \end{bmatrix}
    \stateError_0
    + \sum_{j=1}^{k}
    \begin{bmatrix}
        (\systemResponse_{\state \disturbance})_{k, j} \\
        (\systemResponse_{\ratePGD \disturbance})_{k, j}
    \end{bmatrix}
    \disturbance_{j-1}
    \right\Vert_\infty \\ \label{Eqn: Apply Inequality for Propagating Recursive Tube Bounds, general k}
    &\leq \left\Vert 
    \begin{bmatrix}
        (\systemResponse_{\state \disturbance})_{k, 0} \\
        (\systemResponse_{\ratePGD \disturbance})_{k, 0}
    \end{bmatrix}
    \right\Vert_\infty
    \Vert \stateError_0 \Vert_\infty
    \\ \nonumber
    &\hspace{1cm} + \sum_{j=1}^{k}
    \tubeSize_{j-1}^2
    \left\Vert
    \begin{bmatrix}
        (\systemResponse_{\state \disturbance})_{k, j} \\
        (\systemResponse_{\ratePGD \disturbance})_{k, j}
    \end{bmatrix}
    \hessianBoundStacked
    \right\Vert_\infty \leq \tubeSize_k,
\end{align}
}
\hspace{-1mm}We also prove robust constraint satisfaction, as follows:
{
\setlength{\abovedisplayskip}{4pt}
\setlength{\belowdisplayskip}{4pt}
\begin{align} \nonumber
    &\ratePGDBoundSlope_p^\top (\state_k, \ratePGD_k) + \ratePGDBoundOffset_p 
    = \ \ratePGDBoundSlope_p^\top (\stateNominal_k, \ratePGDNominal_k) + \ratePGDBoundOffset_p + \ratePGDBoundSlope_p^\top (\stateError_k, \ratePGDError_k) \\ \nonumber
    = \ &\ratePGDBoundSlope_p^\top (\stateNominal_k, \ratePGDNominal_k) + \ratePGDBoundOffset_p \\ \nonumber
    &\hspace{5mm} + 
    \left\Vert 
    \ratePGDBoundSlope_p^\top
    \begin{bmatrix}
        (\systemResponse_{\state \disturbance})_{k, 0} \\
        (\systemResponse_{\ratePGD \disturbance})_{k, 0}
    \end{bmatrix}
    \stateError_0
    + \sum_{j=1}^{k}
    \ratePGDBoundSlope_p^\top
    \begin{bmatrix}
        (\systemResponse_{\state \disturbance})_{k, j} \\
        (\systemResponse_{\ratePGD \disturbance})_{k, j}
    \end{bmatrix}
    \disturbance_{j-1}
    \right\Vert_\infty \\ \nonumber
    = \ &\ratePGDBoundSlope_p^\top (\stateNominal_k, \ratePGDNominal_k) + \ratePGDBoundOffset_p + 
    \left\Vert 
    \ratePGDBoundSlope_p^\top
    \begin{bmatrix}
        (\systemResponse_{\state \disturbance})_{k, 0} \\
        (\systemResponse_{\ratePGD \disturbance})_{k, 0}
    \end{bmatrix}
    \right\Vert_1
    \Vert \stateError_0 \Vert_\infty
    \\
    \label{Eqn: Apply Inequality for Robust Constraint Satisfaction, k geq 1}
    &\hspace{5mm} + \sum_{j=1}^{k} \tubeSize_{j-1}^2
    \left\Vert
    \ratePGDBoundSlope_p^\top
    \begin{bmatrix}
        (\systemResponse_{\state \disturbance})_{k, j} \\
        (\systemResponse_{\ratePGD \disturbance})_{k, j}
    \end{bmatrix}
    \hessianBoundStacked
    \right\Vert_1 
    \leq 0.
\end{align}
}
\hspace{-1mm}Finally, from \eqref{Eqn: Inequality for Robust Constraint Satisfaction}, we again directly obtain that $\ratePGDBoundSlope_p^\top (\stateNominal_k, \ratePGDNominal_k) + \ratePGDBoundOffset_p \leq 0$, thus completing the induction step.
\end{proof}

\looseness=-1
We now present a nonlinear program (NLP) to recover the tightest reachable set for the PGD dynamics \eqref{Eqn: Projected Gradient Descent (PGD)}, as characterized by $\tubeSize_\iterationHorizon$ at the end of the iteration horizon $[0, \iterationHorizon]$, while ensuring that the steplength constraints \eqref{Eqn: Steplength bound constraints} are robustly satisfied\footnote{thus enabling the use of the reachable sets of PGD dynamics for constructing valid outer approximations of $\varOptSet$ via Thm. \ref{Thm: Over-approximating Minimizer Set using Forward Reachable Sets}}. The convex cost $\regularizerSystemResponse(\systemResponse)$ is added as a regularizer, to minimize the uncertainty propagation \cite{Leeman2024FastSLS}: 
{
\setlength{\abovedisplayskip}{3.5pt}
\setlength{\belowdisplayskip}{3.5pt}
\begin{subequations}
\label{Eqn: NLP for Tightest Tube Size}
\begin{align}
\label{Eqn: NLP for Tightest Tube Size, Objective}
    \min_{\stateNominalStacked, \ratePGDNominalStacked, \systemResponse, \tubeSizeStacked} \hspace{5mm} &\tubeSize_\iterationHorizon
    + \regularizerSystemResponse(\systemResponse)
    \\
\label{Eqn: NLP for Tightest Tube Size, Constraints}
    \text{s.t.} \hspace{5mm} &\eqref{Eqn: Nominal Dynamics}, \eqref{Eqn: System Responses, Affine Constraints}, \eqref{Eqn: Inequality for Robust Constraint Satisfaction}, \eqref{Eqn: Inequality for Propagating Recursive Tube Bounds}.
\end{align}
\end{subequations}
}
\hspace{-1.2mm}We tractably compute a feasible point of~\eqref{Eqn: NLP for Tightest Tube Size} using the output feedback SLS solver~\cite{leeman2026vision}, with 
$\mathcal{O}(\iterationHorizon(\varDim^3 + \paramDim^3))$
runtime. 


\subsection{Smoothing PGD Dynamics when Non-Differentiable}
\label{subsec: Smoothing PGD Dynamics when Non-Differentiable}


If $\constraintSet$ is neither $\R^\varDim$ nor an affine subspace of $\R^\varDim$, $\proj_\constraintSet(\cdot)$ is continuous but in general non-differentiable \cite[Ex. 7.4-7.5]{WrightRecht2022OptimizationForDataAnalysis},
in which case
$\dynamicsPGD$ and $\dynamicsFullState$ (see \eqref{Eqn: PGD Updates Notation}) are also, in general, non-differentiable. 
To 
construct
the LTV error system and linearization error bounds in Sec. \ref{subsec: LTV Error System for Smooth PGD Dynamics}--\ref{subsec: Linearization Error Bounds}, we \textit{smooth} $\dynamicsPGD$ and $\dynamicsFullState$, as follows.
Given a \textit{sampling radius} $\samplingRadius > 0$, for 
any
$\state = (\var, \param) \in \R^{\varDim + \paramDim}$ and 
$\ratePGD \in \R$, define $\dynamicsPGDSmoothed(\cdot; \samplingRadius): \R^{\varDim + \paramDim + 1} \ra \R^{\varDim} $ and $\dynamicsFullStateSmoothed(\cdot; \samplingRadius): \R^{\varDim + \paramDim + 1} \ra \R^{\varDim + \paramDim} $ by:
{
\setlength{\abovedisplayskip}{3.5pt}
\setlength{\belowdisplayskip}{3.5pt}
\begin{align}
    \label{Eqn: True Dynamics, Smoothed}
    \dynamicsPGDSmoothed(\state, \ratePGD; \samplingRadius) &:= \E_{\samplingVariable \sim \unif(\ballUnit_2^{\varDim + \paramDim})} \big[ \dynamicsPGD\big( (\state, \ratePGD) + \samplingRadius \samplingVariable \big) \big], \\
    \label{Eqn: Full State Dynamics, Smoothed}
    \dynamicsFullStateSmoothed(\state, \ratePGD; \samplingRadius) &:= \big( \dynamicsPGDSmoothed(\state, \ratePGD; \samplingRadius), \param \big).
\end{align}
}
We impose a fixed $\samplingRadiusMax > 0$ such that $\samplingRadius \in (0, \samplingRadiusMax]$. 

We review properties of smoothed functions, as below \cite{Flaxman2005OnlineConvexOptimization}.

\begin{proposition}[\textbf{Properties of Smoothed Functions}]
\label{Prop: Properties of Smoothed Functions}
Let $\exampleConstraintSet \subseteq \R^\exampleInputDim$, and suppose $\exampleFunction: \R^\exampleInputDim \ra \R^\exampleOutputDim$ is $\lipschitz_\exampleFunction$-Lipschitz continuous in the $\infty$-norm over $\exampleConstraintSet \oplus \samplingRadiusMax I_\exampleInputDim \ballUnit_2^\exampleInputDim$. Define $\exampleFunctionSmoothed(\cdot; \samplingRadius): \R^\exampleInputDim \ra \R^\exampleOutputDim$ as:
{
\setlength{\abovedisplayskip}{3.5pt}
\setlength{\belowdisplayskip}{3.5pt}
\begin{align}
    \exampleFunctionSmoothed(\cdot; \samplingRadius) &:= \E_{\samplingVariable \sim \unif(\ballUnit_2^\exampleInputDim)} \big[ \exampleFunction(\varExample + \samplingRadius \samplingVariable) \big], \hspace{5mm} \forall \ \varExample \in \R^\exampleInputDim.
\end{align}
}
\hspace{-1mm}Then, for each $i \in [\exampleOutputDim]$ and $\varExample, \varExample' \in \exampleConstraintSet$:
\begin{enumerate}
    \item $\Vert \exampleFunctionSmoothed(\varExample; \samplingRadius) - \exampleFunction(\varExample) \Vert_\infty \leq \lipschitz_\exampleFunction \samplingRadius$;
    \item $\nabla \exampleFunctionSmoothed_i(\varExample; \samplingRadius) = \frac{\exampleInputDim}{\samplingRadius} \E_{\samplingVariable \sim \unif(\ballUnit_2^\exampleInputDim)} \big[ \exampleFunction_i (\varExample + \samplingRadius \samplingVariable) \samplingVariable \big]$;
    \item $\Vert \nabla \exampleFunctionSmoothed_i(\varExample; \samplingRadius) - \nabla \exampleFunctionSmoothed_i(\varExample'; \samplingRadius) \Vert_1 \leq \frac{\exampleInputDim^{3/2}}{\samplingRadius} \lipschitz_\exampleFunction \cdot \Vert \varExample - \varExample' \Vert_\infty$.
\end{enumerate}
\end{proposition}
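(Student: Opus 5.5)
We prove the three items in order, each via a standard argument for uniform ball smoothing. We use the Lipschitz hypothesis only at points $\varExample + \samplingRadius \samplingVariable$ with $\varExample \in \exampleConstraintSet$, $\samplingVariable \in \ballUnit_2^\exampleInputDim$ and $\samplingRadius \le \samplingRadiusMax$. All such points lie in $\exampleConstraintSet \oplus \samplingRadiusMax I_\exampleInputDim \ballUnit_2^\exampleInputDim$, where $\exampleFunction$ is $\lipschitz_\exampleFunction$-Lipschitz in the $\infty$-norm.

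\emph{Item 1.} Fix $\varExample \in \exampleConstraintSet$. Write $\exampleFunctionSmoothed(\varExample;\samplingRadius) - \exampleFunction(\varExample) = \E[\exampleFunction(\varExample + \samplingRadius\samplingVariable) - \exampleFunction(\varExample)]$. Pull the $\infty$-norm inside the expectation by Jensen's inequality and apply the Lipschitz bound. This gives at most $\lipschitz_\exampleFunction \samplingRadius \E\Vert \samplingVariable\Vert_\infty \le \lipschitz_\exampleFunction \samplingRadius$, since $\Vert \samplingVariable\Vert_\infty \le \Vert \samplingVariable\Vert_2 \le 1$.

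\emph{Item 2.} This is the Flaxman--Kalai--McMahan identity. Write $\exampleFunctionSmoothed_i(\varExample;\samplingRadius) = \frac{1}{\mathrm{vol}(\samplingRadius \ballUnit_2^\exampleInputDim)} \int_{\samplingRadius\ballUnit_2^\exampleInputDim} \exampleFunction_i(\varExample + u)\,du$. Differentiate in $\varExample$ using the divergence theorem (Stokes). The gradient of a ball average equals the sphere integral $\frac{1}{\mathrm{vol}(\samplingRadius\ballUnit)}\int_{\samplingRadius \mathbb{S}} \exampleFunction_i(\varExample+u)\frac{u}{\Vert u\Vert}\,dS(u)$. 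Since $\mathrm{area}(\samplingRadius\mathbb{S})/\mathrm{vol}(\samplingRadius\ballUnit) = \exampleInputDim/\samplingRadius$, this is $\frac{\exampleInputDim}{\samplingRadius}\E_{\samplingVariable\sim\unif(\mathbb{S})}[\exampleFunction_i(\varExample + \samplingRadius\samplingVariable)\samplingVariable]$. The boundary term is justified because $\exampleFunction_i$ is Lipschitz, hence absolutely continuous.

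Here the paper's formula writes the expectation over $\unif(\ballUnit_2^\exampleInputDim)$, whereas the classical identity samples from the sphere. I would prove the identity with the sphere expectation and state that the ball expectation in the statement should be read accordingly. Item 3 holds with either reading, since it only needs $\Vert \samplingVariable\Vert_2 \le 1$.

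\emph{Item 3.} This is the main computational step. From Item 2, $\nabla\exampleFunctionSmoothed_i(\varExample) - \nabla\exampleFunctionSmoothed_i(\varExample') = \frac{\exampleInputDim}{\samplingRadius}\E[(\exampleFunction_i(\varExample+\samplingRadius\samplingVariable) - \exampleFunction_i(\varExample'+\samplingRadius\samplingVariable))\samplingVariable]$. Take the $1$-norm, apply Jensen's inequality, and then the Lipschitz bound. The scalar factor is at most $\lipschitz_\exampleFunction \Vert \varExample-\varExample'\Vert_\infty$, because the $\samplingRadius\samplingVariable$ shifts cancel. The vector factor satisfies $\Vert \samplingVariable\Vert_1 \le \sqrt{\exampleInputDim}\Vert \samplingVariable\Vert_2 \le \sqrt{\exampleInputDim}$. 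Multiplying gives $\frac{\exampleInputDim^{3/2}}{\samplingRadius}\lipschitz_\exampleFunction\Vert\varExample-\varExample'\Vert_\infty$.

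The only delicate points are two. First, the Lipschitz bound must be applied only on the inflated set, which the $\samplingRadius\le\samplingRadiusMax$ constraint guarantees. Second, the differentiation step in Item 2 must be justified, which I would do by Rademacher's theorem together with dominated convergence, or directly via the divergence theorem.
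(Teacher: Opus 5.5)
Your proof is correct. It is the standard argument, but the paper itself gives no derivation: its proof is a one-line pointer to Lemma~1 of Flaxman et al.\ and Sec.~4.2 of Bravo et al. Your write-up is therefore a self-contained version of what those references supply. Item~1 uses Jensen plus the $\infty$-norm Lipschitz bound. Item~2 uses the Stokes/translated-domain computation. Item~3 combines item~2 with $\Vert \samplingVariable \Vert_1 \leq \sqrt{\exampleInputDim}\,\Vert \samplingVariable \Vert_2$.

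Writing it out exposes an error that the citation hides, and your correction of item~2 is right. For $\samplingVariable \sim \unif(\ballUnit_2^\exampleInputDim)$ one has $\E[\samplingVariable \samplingVariable^\top] = \frac{1}{\exampleInputDim + 2} I$. So for $\exampleFunction(\varExample) = a^\top \varExample$, whose smoothing has gradient $a$, the ball-expectation formula returns $\frac{\exampleInputDim}{\exampleInputDim+2}\, a$. Only the unit-sphere version, which is what Flaxman et al.\ prove, equals $\nabla \exampleFunctionSmoothed_i$.

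For the same reason, item~3 must go through the sphere formula, as your proof in fact does. Your remark that item~3 ``holds with either reading'' is true only in the sense that the same estimate bounds either vector. The ball-weighted vector is not the gradient, so that reading does not prove item~3.

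One minor caveat concerns your opening claim that the hypothesis is used only at points $\varExample + \samplingRadius \samplingVariable$. Differentiating at $\varExample$ in item~2, whether by Rademacher plus dominated convergence or by the divergence theorem, uses $\exampleFunction$ on a \emph{neighborhood} of $\varExample + \samplingRadius \ballUnit_2^\exampleInputDim$. The hypothesis guarantees this only when $\samplingRadius < \samplingRadiusMax$, or when $\exampleFunction$ is continuous on all of $\R^\exampleInputDim$, as in the paper's application to the PGD map.

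At $\samplingRadius = \samplingRadiusMax$, item~2 can genuinely fail. Take $\exampleInputDim = 1$, $\exampleConstraintSet = \{0\}$, $\samplingRadius = \samplingRadiusMax = 1$, and $\exampleFunction(x) = 0$ for $|x| \leq 1$, $\exampleFunction(x) = 1$ otherwise. Then $\exampleFunction$ is Lipschitz on $[-1,1]$, yet $\exampleFunctionSmoothed(\varExample; 1) = |\varExample|/2$ near $0$, which is not differentiable at $0 \in \exampleConstraintSet$. It is worth stating this restriction explicitly.
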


\begin{proof}
The above properties of smoothed functions are direct consequences of \cite[Lemma 1]{Flaxman2005OnlineConvexOptimization} and 
\cite[Sec. 4.2]{Bravo2018BanditLearninginConcaveNPersonGames}.
\end{proof}

\begin{remark}
Prop. \ref{Prop: Properties of Smoothed Functions} indicates that selecting a smaller sampling radius $\samplingRadius$ would produce a smooth proxy $\exampleFunctionSmoothed(\cdot; \samplingRadius)$ that more closely approximates $\exampleFunction$, at the cost of having larger curvature and thus inducing larger linearization errors.
\end{remark}

From Prop. \ref{Prop: Properties of Smoothed Functions}, we obtain the following properties of the smoothed dynamics $\dynamicsFullStateSmoothed$ in \eqref{Eqn: True Dynamics, Smoothed}.

\begin{proposition}
(\textbf{Smoothed PGD Dynamics are Lipschitz and admit Lipschitz Gradients})
\label{Prop: Properties of Smoothed Full Dynamics}
There exists $\lipschitz_\dynamicsFullStateSmoothed, \hessianBoundSmoothed_{i, \samplingRadius}$ such that, $\forall \ (\state, \ratePGD), (\state', \ratePGD') \in \varConvexHull \times \paramSet \times [\ratePGDMin, \ratePGDMax]$ and $i \in [\varDim + \paramDim]$:
{
\setlength{\abovedisplayskip}{3.5pt}
\setlength{\belowdisplayskip}{3.5pt}
\begin{align}
\label{Eqn: Smoothed Dynamics vs. Dynamics, Error Bound}
    \Vert \dynamicsFullStateSmoothed(\state, \ratePGD; \samplingRadius) - \dynamicsFullState(\state, \ratePGD) \Vert_\infty &\leq \lipschitz_\dynamicsFullStateSmoothed \samplingRadius, \\
\label{Eqn: Smoothed Dynamics Gradient, Lipschitz Constant}
    \Vert \nabla \dynamicsFullStateSmoothed_i(\state, \ratePGD; \samplingRadius) - \nabla \dynamicsFullStateSmoothed_i(\state', \ratePGD'; \samplingRadius) \Vert_1 &\leq \hessianBoundSmoothed_{i, \samplingRadius} \Vert (\state, \ratePGD) - (\state', \ratePGD') \Vert_\infty.
\end{align}
}
\end{proposition}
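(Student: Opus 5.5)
The plan is to apply Prop.~\ref{Prop: Properties of Smoothed Functions} with $\exampleFunction := \dynamicsPGD$ viewed as a map $\R^{\varDim+\paramDim+1}\ra\R^\varDim$ and $\exampleConstraintSet := \varConvexHull \times \paramSet \times [\ratePGDMin, \ratePGDMax]$, and then lift the conclusions to $\dynamicsFullStateSmoothed$. The lifting is immediate because the $\param$-components of $\dynamicsFullStateSmoothed$ and $\dynamicsFullState$ coincide (both equal $\param$). These components contribute zero to the error in \eqref{Eqn: Smoothed Dynamics vs. Dynamics, Error Bound}, and their gradients are constant unit vectors, so they satisfy \eqref{Eqn: Smoothed Dynamics Gradient, Lipschitz Constant} with any $\hessianBoundSmoothed_{i,\samplingRadius}\geq 0$. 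Hence it suffices to treat the first $\varDim$ components, i.e., $\dynamicsPGDSmoothed$ versus $\dynamicsPGD$. I read \eqref{Eqn: True Dynamics, Smoothed} as sampling $\samplingVariable$ from the unit ball in the full input dimension $\varDim+\paramDim+1$.

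The key step is verifying the hypothesis of Prop.~\ref{Prop: Properties of Smoothed Functions}: $\dynamicsPGD$ must be Lipschitz in the $\infty$-norm on the inflated set $\exampleConstraintSet \oplus \samplingRadiusMax \ballUnit_2^{\varDim+\paramDim+1}$.

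First, this set is bounded. The proof of Prop.~\ref{Prop: Lipschitz Continuity of Gradient of PGD Dynamics} already shows that $\varConvexHull$ is bounded, and that argument used only Thm.~\ref{Thm: Over-approximating Minimizer Set using Forward Reachable Sets}, Prop.~\ref{Prop: Minimizer Set is Bounded}, and Assumption~\ref{Assump: PGD Dynamics, Initial Iterate}, none of which need $\constraintSet$ to be affine. Combined with the compactness of $\paramSet$ (Assumption~\ref{Assump: Parameter Set is Convex and Compact}), the inflated set lies in a compact set $K$.

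Second, $\proj_\constraintSet$ is $1$-Lipschitz in the $2$-norm by Assumption~\ref{Assump: Constraint Set is Convex and Closed}. The map $(\var,\param,\ratePGD)\mapsto \var - \ratePGD\nabla_\var\obj(\var,\param)$ is $C^1$ by Assumption~\ref{Assump: Objective Properties}, since second partials exist and are continuous. Therefore it is Lipschitz on (the convex hull of) $K$, with constant bounded by $\sup_K$ of its Jacobian norm. That supremum is finite: $\nabla^2_{\var\var}\obj$, $\nabla^2_{\var\param}\obj$ and $\nabla_\var\obj$ are continuous on a compact set, and $|\ratePGD| \leq \ratePGDMax + \samplingRadiusMax$.

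Composing these, $\dynamicsPGD$ is Lipschitz in the $2$-norm on $K$. Norm equivalence, $\Vert\cdot\Vert_\infty \leq \Vert\cdot\Vert_2 \leq \sqrt{\varDim+\paramDim+1}\,\Vert\cdot\Vert_\infty$, converts this to an $\infty$-norm constant $\lipschitz_\dynamicsPGD$.

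With the hypothesis in hand, the two claimed bounds follow from Prop.~\ref{Prop: Properties of Smoothed Functions}. Item~1 gives \eqref{Eqn: Smoothed Dynamics vs. Dynamics, Error Bound} with $\lipschitz_\dynamicsFullStateSmoothed := \lipschitz_\dynamicsPGD$. Item~3 gives \eqref{Eqn: Smoothed Dynamics Gradient, Lipschitz Constant} with $\hessianBoundSmoothed_{i,\samplingRadius} := (\varDim+\paramDim+1)^{3/2}\lipschitz_\dynamicsPGD/\samplingRadius$ for $i\in[\varDim]$, and, as noted above, any nonnegative constant (say $0$) for $i>\varDim$.

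The main obstacle I expect is the bookkeeping around the domain. The inflated set must be compact so that the Jacobian supremum is finite. Nonconvexity is not an issue: taking the convex hull of $K$ keeps it compact, and the mean value inequality applies there. Some care is also needed because the Lipschitz constant of $\nabla_\var\obj$ in $\param$ on $K$ is not uniform from Assumption~\ref{Assump: Objective Properties} alone; it must instead come from the continuity of the mixed second partials on the compact set.
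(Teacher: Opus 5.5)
Your proposal is correct and follows the same route as the paper. Both establish that the unsmoothed PGD dynamics are Lipschitz on the bounded, $\samplingRadiusMax$-inflated domain, then read off both bounds from items 1) and 3) of Prop.~\ref{Prop: Properties of Smoothed Functions}. The difference is in the details: your justification of the Lipschitz hypothesis ($\proj_\constraintSet$ is nonexpansive, composed with a $C^1$ gradient step on a compact convex set) is more careful than the paper's one-line appeal to ``continuous, hence Lipschitz on a bounded set,'' which is not valid in general, and your separate handling of the $\param$-components and your choice of smoothing dimension $\varDim+\paramDim+1$ settle points the paper leaves ambiguous.
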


\begin{proof}
\looseness-1Since $\dynamicsFullState$ is continuous, it is $\lipschitz_{\dynamicsFullStateSmoothed}$-Lipschitz continuous over the bounded set $\big((\varConvexHull \times \paramSet) \oplus \samplingRadiusMax I_{\varDim + \paramDim} \ballUnit_2^{\varDim + \paramDim} \big) \times [\ratePGDMin, \ratePGDMax]$. Thus, \eqref{Eqn: Smoothed Dynamics vs. Dynamics, Error Bound}-\eqref{Eqn: Smoothed Dynamics Gradient, Lipschitz Constant} follow directly from Prop. \ref{Prop: Properties of Smoothed Functions}.
\end{proof}


Equipped with
Prop. \ref{Prop: Properties of Smoothed Full Dynamics}, we proceed to adapt the methods in Sec. \ref{subsec: LTV Error System for Smooth PGD Dynamics}-\ref{subsec: Linearization Error Bounds} to settings in which $\dynamicsFullState$ is non-differentiable.
First, we use $\dynamicsFullStateSmoothed$, rather than $\dynamicsFullState$, to re-define the gain and actuation matrices of~\eqref{Eqn: Jacobian Matrices} for the LTV error system as:
{
\setlength{\abovedisplayskip}{3.5pt}
\setlength{\belowdisplayskip}{3.5pt}
\begin{subequations}
\label{Eqn: Jacobian Matrices, with Smoothed Dynamics}
\begin{align}
\label{Eqn: Jacobian Matrix, State, with Smoothed Dynamics}
    \jacobianStateSmoothed_k(\stateNominal_k, \ratePGDNominal_k) &:= \textstyle\frac{\partial \dynamicsFullStateSmoothed}{\partial \state}(\stateNominal_k, \ratePGDNominal_k), \\
    \label{Eqn: Jacobian Matrix, Control, with Smoothed Dynamics}
    \jacobianControlSmoothed_k(\stateNominal_k, \ratePGDNominal_k) &:= \textstyle\frac{\partial \dynamicsFullStateSmoothed}{\partial \ratePGD}(\stateNominal_k, \ratePGDNominal_k).
\end{align}
\end{subequations}
}
\hspace{-1mm}Similarly, we modify the definition of the linearization error $\linearizationErrorSmoothed_k(\state_k, \ratePGD_k, \stateNominal_k, \ratePGDNominal_k) \in \R^{\varDim + \paramDim}$, as given by \eqref{Eqn: Linearization Error}, into:
{
\setlength{\abovedisplayskip}{3.5pt}
\setlength{\belowdisplayskip}{3.5pt}
\begin{align}
\label{Eqn: Linearization Error, with Smoothed Dynamics}
    \linearizationErrorSmoothed_k(\state_k, \ratePGD_k, \stateNominal_k, \ratePGDNominal_k)  & = \dynamicsFullStateSmoothed(\state_k, \ratePGD_k) - \dynamicsFullStateSmoothed(\stateNominal_k, \ratePGDNominal_k) \\ \nonumber 
    &\hspace{5mm} - \jacobianStateSmoothed_k(\stateNominal_k, \ratePGDNominal_k) \stateError_k - \jacobianControlSmoothed_k(\stateNominal_k, \ratePGDNominal_k) \ratePGDError_k,
\end{align}
}
\hspace{-1mm}We also modify the disturbance definition $\disturbance_k$ in \eqref{Eqn: Disturbance}
into $\disturbanceSmoothed_k$,
defined below,
to additionally account for the difference between the true dynamics $\dynamicsFullState$ and its smoothed proxy $\dynamicsFullStateSmoothed$:
{
\setlength{\abovedisplayskip}{3.5pt}
\setlength{\belowdisplayskip}{3.5pt}
\begin{align} 
\label{Eqn: Disturbance, with Smoothed Dynamics}
    \disturbanceSmoothed_k &= \linearizationErrorSmoothed_k(\state_k, \ratePGD_k, \stateNominal_k, \ratePGDNominal_k) + \big[ \dynamicsFullState(\state_k, \ratePGD_k) - \dynamicsFullStateSmoothed(\state_k, \ratePGD_k) \big] \\ \nonumber
    &\hspace{1cm} + \big[ \dynamicsFullStateSmoothed(\stateNominal_k, \ratePGDNominal_k) - \dynamicsFullState(\stateNominal_k, \ratePGDNominal_k) \big].
\end{align}
}
\hspace{-1mm}Using the refined  linearized dynamics and linearization errors \eqref{Eqn: Jacobian Matrices, with Smoothed Dynamics}-\eqref{Eqn: Disturbance, with Smoothed Dynamics},
we derive an expression analogous to \eqref{Eqn: State Error LTV Dynamics} for the state error $\stateError_k := \state_k - \stateNominal_k$, at each iteration $k$:
{
\setlength{\abovedisplayskip}{3.5pt}
\setlength{\belowdisplayskip}{3.5pt}
\begin{align} \nonumber
    \stateError_{k+1} 
    &= \jacobianStateSmoothed_k(\stateNominal_k, \ratePGDNominal_k) \stateError_k + \jacobianControlSmoothed_k(\stateNominal_k, \ratePGDNominal_k) \ratePGDError_k + \disturbanceSmoothed_k.
\end{align}
}

\looseness-1We now adapt Prop. \ref{Prop: Lipschitz Continuity of Gradient of PGD Dynamics}, Prop. \ref{Prop: Quadratic Bound on Linearization Error}, and Thm. \ref{Thm: Robust Steplength Constraint Satisfaction} to the setting in which $\dynamicsFullState$ is non-differentiable.
For Prop. \ref{Prop: Lipschitz Continuity of Gradient of PGD Dynamics}, we replace \eqref{Eqn: Lipschitz Continuity of Gradient of PGD Dynamics} with \eqref{Eqn: Smoothed Dynamics Gradient, Lipschitz Constant}.
For Prop. \ref{Prop: Quadratic Bound on Linearization Error}, we replace $\hessianBound_i$ with $\hessianBoundSmoothed_i$ in \eqref{Eqn: Quadratic Bound on Linearization Error}, i.e.,:
{
\setlength{\abovedisplayskip}{3.5pt}
\setlength{\belowdisplayskip}{3.5pt}
\begin{align}
\label{Eqn: Quadratic Bound on Linearization Error, with Smoothed Dynamics}
    |\linearizationErrorSmoothed_{k,i}(\state_k, \ratePGD_k, \stateNominal_k, \ratePGDNominal_k)| &\leq \hessianBoundSmoothed_i \cdot \Vert (\stateError_k, \ratePGDError_k) \Vert_\infty^2,
\end{align}
}
which can likewise be proved via \eqref{Eqn: Smoothed Dynamics Gradient, Lipschitz Constant} from Prop. \ref{Prop: Lipschitz Continuity of Gradient of PGD Dynamics}.

Next, we modify the disturbance bounds \eqref{Eqn: Disturbance Bound, Stacked} to incorporate the \textit{smoothing error}, i.e., the difference between $\dynamicsFullState$ and $\dynamicsFullStateSmoothed$ as evaluated on the true and nominal trajectories, in addition to the linearization error $\linearizationError_k(\cdot)$:
{
\setlength{\abovedisplayskip}{3.5pt}
\setlength{\belowdisplayskip}{3.5pt}
\begin{align} \nonumber
    \disturbanceSmoothed_k &= \linearizationErrorSmoothed_k(\state_k, \ratePGD_k, \stateNominal_k, \ratePGDNominal_k) + \big[ \dynamicsFullState(\state_k, \ratePGD_k) - \dynamicsFullStateSmoothed(\state_k, \ratePGD_k) \big] \\ \nonumber
    &\hspace{1cm} + \big[ \dynamicsFullStateSmoothed(\stateNominal_k, \ratePGDNominal_k) - \dynamicsFullState(\stateNominal_k, \ratePGDNominal_k) \big] \\ \label{Eqn: Disturbance Bound, Stacked, with Smoothed Dynamics}
    &\in \big( 2 \lipschitz_\dynamicsFullStateSmoothed I_{\varDim + \paramDim} + \Vert (\stateError_k, \ratePGDError_k) \Vert_\infty^2 \hessianBoundSmoothedStacked \big) \ballUnit_\infty^{\varDim + \paramDim},
\end{align}
}
$\hspace{-1mm}$where the value $2 \lipschitz_\dynamicsFullStateSmoothed$ in \eqref{Eqn: Disturbance Bound, Stacked, with Smoothed Dynamics} arises by applying \eqref{Eqn: Smoothed Dynamics vs. Dynamics, Error Bound}, 
and $\hessianBoundSmoothedStacked := \diag\{\hessianBoundSmoothed_1, \cdots, \hessianBoundSmoothed_{\varDim + \paramDim}\}$.

We then modify Thm. \ref{Thm: Robust Steplength Constraint Satisfaction} by respectively replacing \eqref{Eqn: Inequality for Robust Constraint Satisfaction} and \eqref{Eqn: Inequality for Propagating Recursive Tube Bounds} with \eqref{Eqn: Inequality for Robust Constraint Satisfaction, with Smoothed Dynamics} and \eqref{Eqn: Inequality for Propagating Recursive Tube Bounds, with Smoothed Dynamics}, as given below:
{
\setlength{\abovedisplayskip}{3.5pt}
\setlength{\belowdisplayskip}{3.5pt}
\begin{align} 
\label{Eqn: Inequality for Robust Constraint Satisfaction, with Smoothed Dynamics}
    &\ratePGDBoundSlope_p^\top (\stateNominal_k, \ratePGDNominal_k) + \ratePGDBoundOffset_p + \left\Vert \ratePGDBoundSlope_p^\top
    \begin{bmatrix}
        (\systemResponse_{\state \disturbance})_{k, 0} \\
        (\systemResponse_{\ratePGD \disturbance})_{k, 0}
    \end{bmatrix}
    \right\Vert_1 \diam(\paramSet) \\ \nonumber
    &\hspace{5mm} + \sum_{j=1}^k  \left\Vert \ratePGDBoundSlope_p^\top
    \begin{bmatrix}
        (\systemResponse_{\state \disturbance})_{k, 0} \\
        (\systemResponse_{\ratePGD \disturbance})_{k, 0}
    \end{bmatrix}
    \begin{bmatrix}
        2 \lipschitz_\dynamicsFullStateSmoothed I_{\varDim + \paramDim}
        & \tubeSize_{j-1}^2 \hessianBoundSmoothedStacked
    \end{bmatrix}
    \right\Vert_1 \leq 0, \\ \nonumber
    &\hspace{2cm} \forall \ k \in [0, \iterationHorizon], p \in [2], \\ 
\label{Eqn: Inequality for Propagating Recursive Tube Bounds, with Smoothed Dynamics}
    &\hspace{-5pt}\left\Vert 
    \ratePGDBoundSlope_p^\top
    \begin{bmatrix}
        (\systemResponse_{\state \disturbance})_{k, 0} \\
        (\systemResponse_{\ratePGD \disturbance})_{k, 0}
    \end{bmatrix}
    \right\Vert_\infty \diam(\paramSet) \\ \nonumber
    &\hspace{-6pt} +\hspace{-3pt} \sum_{j=1}^k \left\Vert 
    \begin{bmatrix}
        (\systemResponse_{\state \disturbance})_{k, 0} \\
        (\systemResponse_{\ratePGD \disturbance})_{k, 0}
    \end{bmatrix}
    \begin{bmatrix}
        2 \lipschitz_\dynamicsFullStateSmoothed I_{\varDim + \paramDim}
        & \tubeSize_{j-1}^2 \hessianBoundSmoothedStacked
    \end{bmatrix}
    \right\Vert_\infty\hspace{-7pt} \leq \tubeSize_k,\ \ \forall k \in [0, \iterationHorizon].
\end{align}
}
$\hspace{-1mm}$The validity of \eqref{Eqn: Inequality for Robust Constraint Satisfaction, with Smoothed Dynamics} and \eqref{Eqn: Inequality for Propagating Recursive Tube Bounds, with Smoothed Dynamics} as replacements for \eqref{Eqn: Inequality for Robust Constraint Satisfaction} and \eqref{Eqn: Inequality for Propagating Recursive Tube Bounds} in Thm. \ref{Thm: Robust Steplength Constraint Satisfaction}, for non-differentiable dynamics $\dynamicsFullState$, can be verified by retracing the proof of Thm. \ref{Thm: Robust Steplength Constraint Satisfaction}, and applying the modified disturbance bound \eqref{Eqn: Disturbance Bound, Stacked, with Smoothed Dynamics} instead of its original form \eqref{Eqn: Disturbance Bound, Stacked} when appropriate. 
Similarly, we refine the NLP \eqref{Eqn: NLP for Tightest Tube Size}, used to recover the tightest end-of-horizon tube size $\tubeSize_\iterationHorizon$, by replacing \eqref{Eqn: Inequality for Robust Constraint Satisfaction} and \eqref{Eqn: Inequality for Propagating Recursive Tube Bounds} with \eqref{Eqn: Inequality for Robust Constraint Satisfaction, with Smoothed Dynamics} and \eqref{Eqn: Inequality for Propagating Recursive Tube Bounds, with Smoothed Dynamics}, respectively.

%% file: 6_Experiments.tex
\section{Simulations}
\label{sec: Experiments}
Here, we apply Thms. \ref{Thm: Over-approximating Minimizer Set using Forward Reachable Sets}-\ref{Thm: Robust Steplength Constraint Satisfaction} compute over-approximations of the parameterized minimizer sets for specific objectives.





\subsection{Quadratic Optimization}
\label{subsec: ex_quadratic_optim}


Consider the unconstrained minimization of a scalar cost:
{
\setlength{\abovedisplayskip}{3.5pt}
\setlength{\belowdisplayskip}{3.5pt}
\begin{equation}
    \label{eq:scalar_quadratic_cost}
    \min_{\var \in \R} \objSq(\var, \param) := \var^2 + \param \var
\end{equation}
}
\hspace{-1.5mm}over $\var \in \R$,
with parameter uncertainty over $\param \in \paramSetSq := [-0.1, 0.1]$.
$\objSq$ is $m$-strongly convex with $L$-Lipschitz gradients, where
$m = L = 2$.
The PGD dynamics are:
\begin{equation}
    \label{eq:scalar_quadratic_pgd}
    \dynamicsPGDSq(\var, \param, \ratePGD) := \var - \ratePGD (2 \var + \param).
\end{equation}
We 
compute $\nabla^2 \dynamicsFullState_i$ for $i \in \{1, 2\}$ to find that the curvature of $\dynamicsFullState$ can be bounded by $\hessianBound_1=2$, $\hessianBound_2=0$, as in Prop. \ref{Prop: Lipschitz Continuity of Gradient of PGD Dynamics}.

\looseness-1Choosing $\var_0 = 1.0$, $\ratePGDMin=0.4$, $\ratePGDMax=0.6$ and applying Thm. \ref{Thm: Robust Steplength Constraint Satisfaction}, we bound in 
$\infty$-norm
\emph{iterates} of PGD dynamics with horizon $\iterationHorizon=20$. 
By~\eqref{eq:PGD_convergence_rate} and Prop. \ref{Prop: Convergence of PGD}, these iterates converge exponentially to $\varOptSet$ with rate $\convergenceRatePGD = 0.2$.
As our SLS bounds ensure that the iterates are bounded between $\overline{\var} = 10$ and $\underline{\var} = - \overline{\var}$, it can be shown that $\lVert \var_0 - \varOpt(\param) \rVert \le \frac{\lVert \overline{\var} - \underline{\var} \rVert}{1 - \convergenceRatePGD^\iterationHorizon}$ for all $\param \in \paramSet$, overapproximating the distance between the iterates and solution ($\convergenceRadius_k(\var_0)$ in Thm. \ref{Thm: Over-approximating Minimizer Set using Forward Reachable Sets}).
These bounds are combined in Fig.~\ref{fig:unconstrained_quadratic} to overapproximate the parameterized minimizer set of~\eqref{eq:scalar_quadratic_cost}.

\begin{figure}
    \centering
    \includegraphics[width=\linewidth]{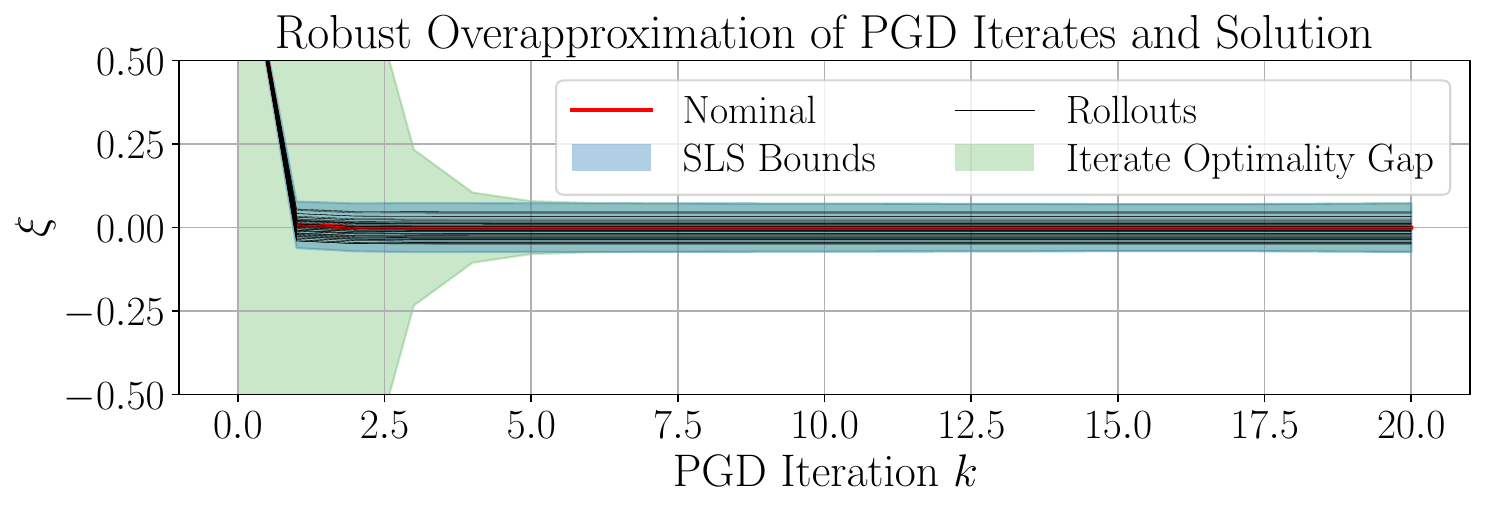}
    \caption{%
        Robust over-approximation of the parameterized solution set of~\eqref{eq:scalar_quadratic_cost}.
        Individual rollouts of PGD corresponding to realizations of $\param$ Monte Carlo sampled from $\paramSetSq$ are shown in black, and the ``nominal'' rollout with $\param = 0$ is shown in red. 
        The blue bounds show the reachable-set bounds computed via Thm. \ref{Thm: Robust Steplength Constraint Satisfaction} that must contain the iterate trajectory for any parameter in $\paramSetSq$. The outer green bounds are guaranteed to contain $\varOptSet$ through Thm. \ref{Thm: Over-approximating Minimizer Set using Forward Reachable Sets}.%
    }
    \label{fig:unconstrained_quadratic}
\end{figure}

\looseness-1
The
minimizer set
for \eqref{eq:scalar_quadratic_cost}
can be explicitly expressed as $\varOptSet = \{ - \frac{\param}{2} \mid \param \in \paramSet \}$.
While our SLS-computed bounds are conservative relative to
$\varOptSet$,
our method can provide tighter bounds than classic robust optimization methods when
no closed-form expression for 
$\varOptSet$ is available
(see Sec. \ref{subsec: ex_lqr}).

\subsection{LQR with Intent Uncertainty}
\label{subsec: ex_lqr}

Consider the following parameterized LQR problem, with variable $\var := (x_0, u_0, \ldots, x_{\LQRHorizon-1}, u_{\LQRHorizon-1}, x_\LQRHorizon) \in \R^{4(T+1) + 2T}$, objective $\objLQR(\var, \param)$, and parameter $\param \in \R$:
{
\setlength{\abovedisplayskip}{3.5pt}
\setlength{\belowdisplayskip}{3.5pt}
\begin{subequations}
\label{eq:lqr}
\begin{align}
\label{eq:lqr_cost}
    \min_{\var} \hspace{3mm} &\frac{1}{2} \textstyle x_\LQRHorizon^\top \LQRStateCost_\LQRHorizon x_\LQRHorizon + \frac{1}{2} \sum_{\LQRIterate=0}^{\LQRHorizon} x_\LQRIterate^\top \LQRStateCost_\LQRIterate x_\LQRIterate + \param u_\LQRIterate^\top \LQRControlCost_\LQRIterate u_\LQRIterate, \\
\label{eq:lqr_dyn}
    &x_{\LQRIterate+1} = \LQRStateDyn_{\LQRIterate} x_{\LQRIterate} + \LQRControlDyn_{\LQRIterate} u_{\LQRIterate}, \hspace{5mm} x_0 = \bar{x}_0,
\end{align}
\end{subequations}
}
\hspace{-1mm}where:
\begin{equation}
    \LQRStateDyn_\LQRIterate = \begin{bsmallmatrix}
        1 & 1 & 0 & 0 \\
        0 & 1 & 0 & 0 \\
        0 & 0 & 1 & 1 \\
        0 & 0 & 0 & 1
    \end{bsmallmatrix}, 
    \LQRControlDyn_\LQRIterate = \begin{bsmallmatrix}
        0 & 0 \\
        1 & 0 \\
        0 & 0 \\
        0 & 1 
    \end{bsmallmatrix}, 
    \LQRStateCost_\LQRIterate = 0.1 I,
    \LQRControlCost_\LQRIterate = 0.1 I.
\end{equation}
In words, $\LQRStateDyn_\LQRIterate$ and $\LQRControlDyn_\LQRIterate$ describe time-invariant double integrator dynamics.
Note that the combined state $\var$ has dimension $4(\LQRHorizon + 1) + 2\LQRHorizon = 64$ for $T=10$.
Above, the parameter uncertainty over $\param \in \paramSetLQR := [0.9, 1.1]$ influences the relative weight of state cost and control cost to 
a path planning agent.
We compactly write the cost~\eqref{eq:lqr_cost} as $\objLQR(\var, \param) = \frac{1}{2} \var^\top H \var = \frac{1}{2} \var^\top (H_\LQRStateCost + \param H_\LQRControlCost) \var$ and dynamics~\eqref{eq:lqr_dyn} as $M \var = b$ for appropriate $H$, $M$, and $b$. 
Concretely,
{
\setlength{\abovedisplayskip}{3.5pt}
\setlength{\belowdisplayskip}{3.5pt}
\begin{align*}
    H_\LQRStateCost &= \blockDiag(\LQRStateCost_0, O, \ldots, \LQRStateCost_{\LQRHorizon-1}, O, \LQRStateCost_{\LQRHorizon}), \\
    H_\LQRControlCost &= \blockDiag(O, \LQRControlCost_0, \ldots, O, \LQRControlCost_{\LQRHorizon-1}, \LQRStateCost_{\LQRHorizon}), 
\end{align*}
}
The constraint manifold is the affine space given by $\{\var = Pz + d: z \in \R^\varDim \}$, with $P := I - M^\top (M M^\top)^{-1}M$ and $d = M^\top (MM^\top)^{-1}b$, 
yielding PGD dynamics 
{
\setlength{\abovedisplayskip}{3.5pt}
\setlength{\belowdisplayskip}{3.5pt}
\begin{equation}
    \label{eq:lqr_pgd}
    \dynamicsPGDLQR(\var, \param, \ratePGD) = P(I - \ratePGD H) \var + d.
\end{equation}
}
\hspace{-3mm}$\objLQR$ is $\min_{\param \in \paramSetLQR} \lambda_{\rm min}(H) = 0.09$-strongly convex and has gradients that are Lipschitz with constant $\max_{\param \in \paramSetLQR} \lambda_{\rm max}(H) = 0.11$.
Since the LQR problem is high-dimensional,
the 
computation of the curvature bounds $\hessianBoundStacked$ is 
much
more challenging than was the case for the
example in Sec. \ref{subsec: ex_quadratic_optim}.
Fixing the region $\overline{\var} = (20, 20, \ldots, 20)$, $\underline{\var} = - \overline{\var}$, $\ratePGDMin=9.9$, $\ratePGDMax=10.1$ (guaranteed by SLS to contain the true iterates), we 
compute a guaranteed local over-approximation using the interval arithmetic toolbox {\tt immrax}~\cite{immrax}.
Then,
as detailed
in Sec. \ref{subsec: ex_quadratic_optim}, 
we compute bounds on the iterates $\xi_k$ of PGD minimizing~\eqref{eq:lqr_cost}
across $\iterationHorizon = 10$ iterations, and bloat these bounds using the convergence rate $\convergenceRatePGD = 0.111$.
In Fig. \ref{fig:lqr_slices}, we visualize components of the resulting anytime over-approximations of
$\varOptSet$
corresponding to position states at LQR timesteps 0, 3, and 6.
Our bounds are only slightly larger than 
those indicated by
Monte Carlo sampled rollouts. 

\begin{figure}
    \centering
    \includegraphics[width=\linewidth]{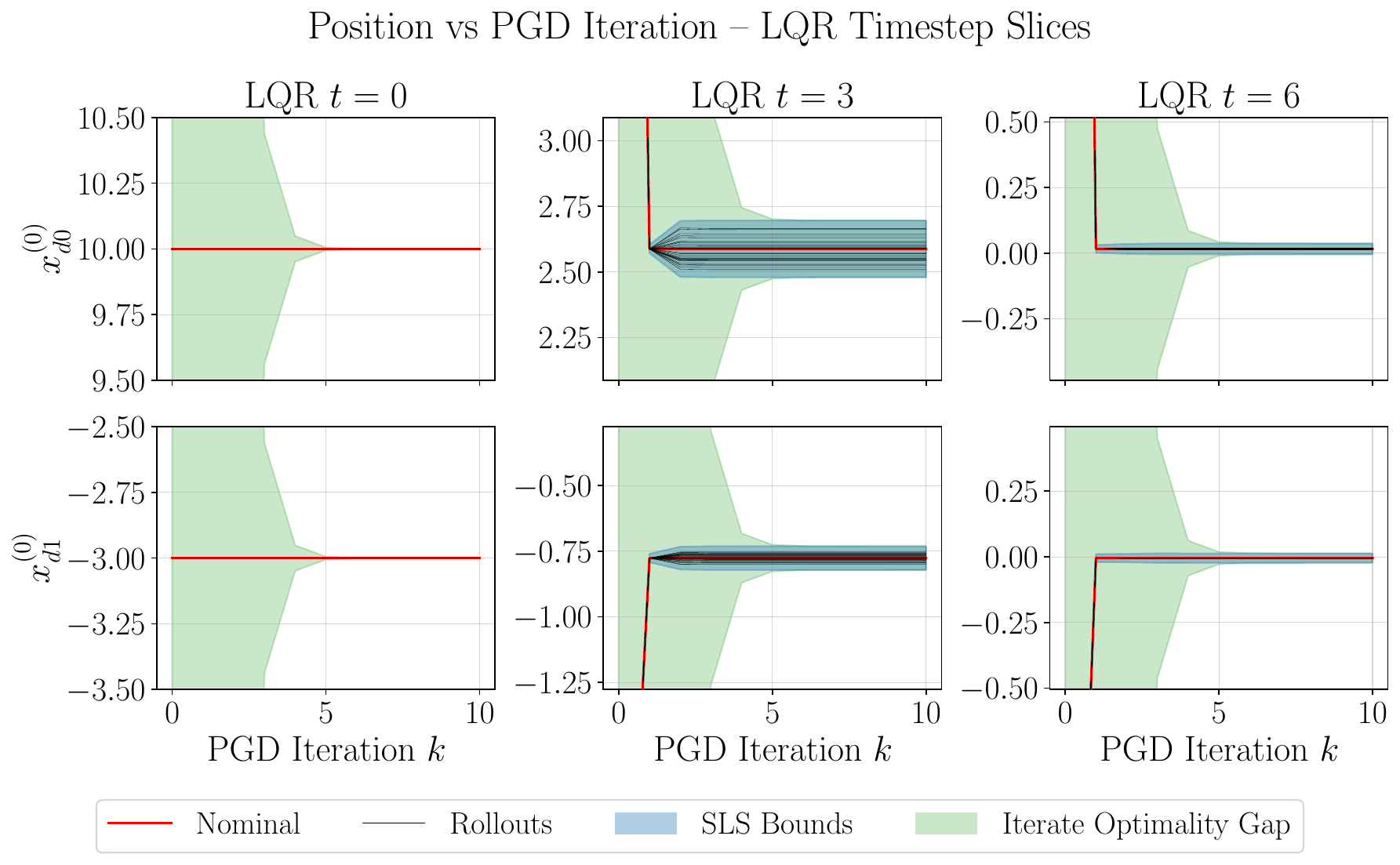}
    \caption{%
        Robust over-approximation of the planned positions of a parameterized LQR system.
        All visual elements are as in Fig. \ref{fig:unconstrained_quadratic}.
        For space, we only show the components of $\var$ corresponding to position on timesteps $\LQRIterate = 0, 3, 6$.%
        }
    \label{fig:lqr_slices}
\end{figure}

\subsubsection{Baseline Comparison}
\label{subsubsec:baseline}
Compared to our method,
classical Implicit Function Theorem methods~\cite[Section 11.5.2]{WrightRecht2022OptimizationForDataAnalysis}
perform poorly.
For
dual variables
$\varDual$,
differentiating the KKT conditions of 
\eqref{eq:lqr} at an optimal $(\varOpt, \varDualOpt)$, yields:
{
\setlength{\abovedisplayskip}{3.5pt}
\setlength{\belowdisplayskip}{3.5pt}
\begin{align*}
    \begin{bmatrix}
        H & M^\top \\
        M & O
    \end{bmatrix}
    \begin{bmatrix}
        \nabla_\param \varOpt(\param) \\ 
        \nabla_\param \varDualOpt(\param)
    \end{bmatrix} = 
    \begin{bmatrix}
        - H_\LQRControlCost \var \\ 
        \zero
    \end{bmatrix},
\end{align*}
}
\hspace{-1mm}with solution $\nabla_\param \varOpt = [ H^{-1} M^\top (M H^{-1} M^\top)^{-1} M H^{-1} - H^{-1} ] H_\LQRControlCost \var := \mathcal{P} H_\LQRControlCost \var$.
Over the parameter region $[\underline{\var}, \overline{\var}]$ guaranteed to be robustly forward invariant for PGD by SLS, $L_{\varOpt} := \sup_{\var \in [\underline{\var}, \overline{\var}], \param \in \paramSetLQR} \lVert \nabla_\param \varOpt \rVert_2$ is a local Lipschitz constant for $\varOpt(\param)$. 
By sub-multiplicativity of norms and standard linear algebraic techniques, for any $\param_1, \param_2 \in \paramSetLQR$
{
\setlength{\abovedisplayskip}{3.5pt}
\setlength{\belowdisplayskip}{3.5pt}
\begin{align*}
    \lVert \varOpt(\param_1) - \varOpt(\param_2) &\rVert_2 \le L_\varOpt \lVert \param_1 - \param_2 \rVert_2 \\
    & \le \lVert \mathcal{P} \rVert_2 \lVert H_\LQRControlCost \rVert_2 \diam(\paramSetLQR) \textstyle\sup_{\var \in [\underline{\var}, \overline{\var}]} \lVert \var \rVert_2 \\
    & = \lVert H^{-1} \rVert_2 \lVert H_\LQRControlCost \rVert_2 \diam(\paramSetLQR) \lVert \overline{\var} \rVert_2  \approx 35.
\end{align*}
}
\hspace{-1mm}The maximum bound width computed by our method for \emph{any} component of $\var$ is $0.2$, which is much less conservative. 
We note that
an analytical expression for $\varOptSet$ is unavailable 
since the LQR dynamics impose multiple equality constraints.

\subsection{Constrained Quadratic Optimization}
\label{subsec:ex_constrained_quadratic}

Consider the following constrained variant of~\eqref{eq:scalar_quadratic_cost}:
{
\setlength{\abovedisplayskip}{3pt}
\setlength{\belowdisplayskip}{3pt}
\begin{subequations}
\label{eq:scalar_quadratic_constrained_cost}
\begin{align}
    \min_{\var \in \R} \hspace{5mm} & \objSqCons(\var, \param) := \SqcQuadScale \var^2 + \SqcLinScale \param \var, \\
    \text{s.t.} \hspace{5mm} &\var \ge \SqcConsVal.
\end{align}
\end{subequations}
}

Since the constraint 
set above
is \emph{not} an affine subspace, the PGD dynamics are non-smooth, so we apply the smoothing 
process in Sec.~\ref{subsec: Smoothing PGD Dynamics when Non-Differentiable} to obtain 
reachable set bounds.

The objective satisfies Assumption~\ref{Assump: Objective Properties} with $m = L = 2 \SqcQuadScale$.
The gradient descent dynamics without projection are similar to~\eqref{eq:scalar_quadratic_pgd}, and have Lipschitz constant $\SqcLipschitz = \lvert 1 - 2 \SqcQuadScale \ratePGD \rvert + \lvert \SqcLinScale \ratePGD \rvert + \lvert 2 \SqcQuadScale \var + \SqcLinScale \param \rvert$ 
as in Prop~\ref{Prop: Properties of Smoothed Functions}. 
Since this problem is scalar, projection is non-expansive in every norm, and $\SqcLipschitz$ is also a Lipschitz constant for PGD dynamics.
Applying Prop~\ref{Prop: Properties of Smoothed Functions}, points 3) and 1) with sampling radius $\samplingRadius = 0.1$ bound the curvature of the smoothed dynamics and their distance from the true PGD update. 
Then, \eqref{Eqn: Disturbance Bound, Stacked, with Smoothed Dynamics}--\eqref{Eqn: Inequality for Propagating Recursive Tube Bounds, with Smoothed Dynamics} 
over-approximate
PGD iterates for any parameterized realization of~\eqref{eq:scalar_quadratic_constrained_cost}. 

Using $\var_0 = 0.5$, 
$\paramSet = [-0.1, 0.1]$, 
$\SqcQuadScale = 0.0521$, $\SqcLinScale = 0.0054$, $\ratePGDMin = 9.54$, and $\ratePGDMax = 9.56$,
we outer-approximate
$\varOptSet$ for~\eqref{eq:scalar_quadratic_constrained_cost} 
(see Fig. \ref{fig:constrained_quadratic}). 
Due to error between the true nonsmooth PGD update and the smoothed dynamics used for SLS, each PGD iteration incurs additional conservativeness. 
Rollouts
quickly approach the constraint boundary, causing the over-approximation to contain non-differentiable points for 
most
of the iteration horizon;
we thus require
Prop.~\ref{Prop: Properties of Smoothed Functions} to bound the curvature of the smoothed dynamics. 
Both factors increase conservativeness compared to previous examples. 

\begin{figure}
    \centering
    \includegraphics[width=\linewidth]{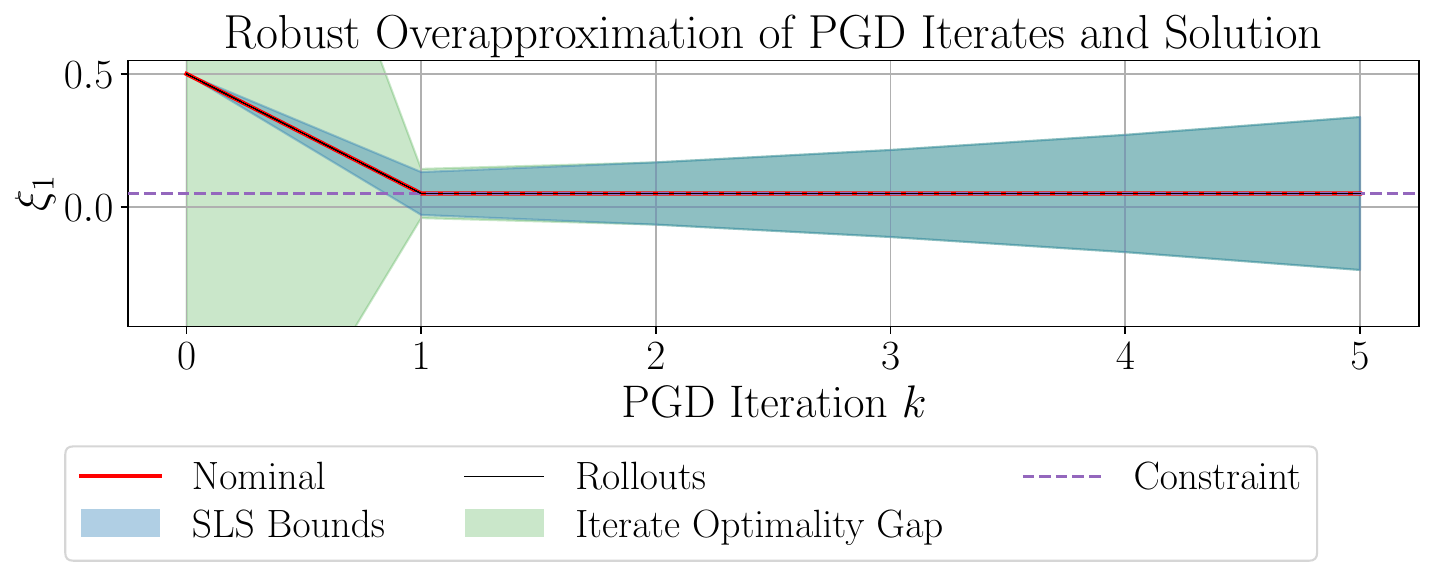}
    \caption{%
        Robust over-approximation of the parameterized minimizer set of~\eqref{eq:scalar_quadratic_constrained_cost}.
        All visual elements are as in Fig.~\ref{fig:unconstrained_quadratic}. 
        The boundary of the constraint set $\constraintSet$ is also shown in purple, for convenience. 
    }
    \label{fig:constrained_quadratic}
\end{figure}





%% file: 7_Conclusion_Future_Work.tex
\section{Conclusion}
\label{sec: Conclusion and Future Work}

We present a novel system-level synthesis (SLS)-based method for over-approximating the minimizer set of strongly convex objective costs characterized by uncertain parameters.
Our method leverages the convergence properties of projected gradient descent (PGD) dynamics over convex optimization landscapes, to recast the problem of over-approximating the minimizer set as that of computing the PGD dynamics' forward reachable sets.
By considering the cost parameter as constant but unknown, we model PGD dynamics as a nonlinear system with initial state uncertainty, and refine existing nonlinear SLS techniques to compute its forward reachable sets.
Our numerical results illustrate that our method outperforms sensitivity analysis and fixed-steplength PGD baselines in over-approximating the minimizer set with low conservativeness.
Future work will fuse our method for minimizer set over-approximation with adaptive control techniques for shrinking $\Theta$ online from observations. 

